\documentclass[11pt]{amsart}
\oddsidemargin 3ex
\evensidemargin 3ex
\textheight 7.9in
\textwidth 6.0in

\usepackage[utf8x]{inputenc}
\usepackage{amsfonts}
\usepackage{amssymb}
\usepackage{amsmath}
\usepackage{delarray}
\usepackage{graphicx}
\usepackage[all]{xy}
\vfuzz2pt 
\hfuzz2pt 
\newtheorem{thm}{Theorem}[section]

\newtheorem{prop}[thm]{Proposition}
\newtheorem{conj}[thm]{Conjecture}

\newtheorem{defn}[thm]{Definition}
\theoremstyle{remark}
\newtheorem{remark}[thm]{Remark}
\numberwithin{equation}{section}
\newtheorem{example}[thm]{Example}


\begin{document}

\newpage
\title{Reduction and integrability \\ of stochastic dynamical systems}

\author{Nguyen Tien Zung \& Nguyen Thanh Thien}
\address{Institut de Mathématiques de Toulouse, UMR5219, Université Toulouse 3}
\email{tienzung.nguyen@math.univ-toulouse.fr, ttnguyen@math.univ-toulouse.fr.}

\subjclass{37H10, 70H06, 37J15, 37J35}
\keywords{integrable system, stochastic dynamical system,reduction, stochastic process}%

\dedicatory{Dedicated to Anatoly Timofeevich Fomenko on the occasion of his $70th$ birthday}
\begin{abstract} 
This paper is devoted to the study of qualitative geometrical properties of stochastic dynamical systems, namely their symmetries, reduction and integrability. In particular, we show that an SDS which is diffusion-wise symmetric with respect to a proper Lie group action can be diffusion-wise reduced to an SDS on the quotient space. We also show necessary and sufficient conditions for an SDS to be projectable via a surjective map. We then introduce the notion of integrability of SDS's, and extend the results about the existence and structure-preserving property of Liouville torus actions from the classical case to the case of integrable SDS's. We also show how integrable SDS's are related to compatible families of integrable Riemannian metrics on manifolds.
\end{abstract}

\maketitle

\section{Introduction}

The aim of this paper is to study the reduction and integrability of stochastic dynamical systems (SDS) which are given by stochastic differential equations (SDE) on manifolds. Our motivation for doing this come in particular from the problem of physics-like second-order stochastic models of the prices of financial assets, which will be presented in a separate work \cite{Zung}. In this paper, we will write SDE's using the Stratonovich formulation: \begin{equation}\label{eq:Stratonovich}dx_t = X_0dt + \sum_{i=1}^m X_i \circ dB_{t}^i,\end{equation} where $x_t$ means a point on a manifold $M$ which depends on the time variable $t$, $X_0,X_1,\hdots,X_m$ are vector fields on $M$, and $B^1,\hdots,B^m$ denote independent Brownian motions (Wiener processes) on $\mathbb{R},$ see, e.g. \cite{Bismut1981,I1989,Ku1997}. Throughout this paper, we will use the notation \begin{equation}\label{notation}X = X_0 + \sum_{i=1}^m X_i \circ \dfrac{dB_{t}^i}{dt}\end{equation} to denote a stochastic dynamical system $X$  which is generated by the SDE \eqref{eq:Stratonovich}. The reason for using the Stratonovich formulation instead of the Ito calculus is that systems written in Stratonovich form behave well under transformations of coordinates, just like usual vector fields.

Recall (see e.g., \cite{Ok2003}) that the linear second-order differential operator
\begin{equation}
A_X=X_0 + \dfrac{1}{2}\sum X_i^2 
\end{equation}
is called the \textbf{generator}, or \textbf{diffusion operator}, of $X = X_0 + \sum_{i=1}^m X_i \circ \dfrac{dB_{t}^i}{dt}$. The meaning of this operator is that the stochastic process $x_t$ generated by $X$ on $M$ is completely determined by $A_X$. In particular, the formula
\begin{equation}
A_Xf(x) = \lim_{t \rightarrow 0}\dfrac{\mathbb{E}^x[f(x_t)]-f(x)}{t}, 
\end{equation}
holds for any function $f \in C^2(M,\mathbb{R})$ and any point $x\in M$, where $\mathbb{E}^x$ means the conditional expectation value under the initial condition $x_0 = x.$ 

If two SDS's $X = X_0 + \sum X_i \circ \dfrac{dB_{t}^i}{dt}$ and $Y = Y_0 + \sum Y_j \circ \dfrac{dB_{t}^j}{dt}$ have the same diffusion operator, i.e. $A_X = X_0 + \dfrac{1}{2}\sum X_i^2 = A_Y = Y_0 + \dfrac{1}{2}\sum Y_j^2$ then we will say that $X$ and $Y$ are \textbf{diffusion-equivalent}. In this paper, we consider SDS's mainly up to diffusion equivalence only.

Section 2 of this paper is devoted to the problem of reduction of SDS's. Recall that, for deterministic dynamical systems, there are basically two ways to reduce their dimension: 1) retriction to an invariant submanifold (say given by a level set of a first integral), and 2) projection to a quotient space (say with respect to a system-preserving action of a Lie group). Here we want to do the same thing, but for SDS's. 

The notions of first integrals and invariant submanifolds can be extended in a natural way from the category of deterministic systems to the category of stochastic systems, and they have been already studied by many people. We will recall them briefly in Subsection 2.1. The notion of symmetry groups for SDS's is more tricky. Many authors before us studied only strict symmetries, i.e. they require all the vector fields $X_0,X_1,\hdots,X_m$ in the expression (\ref{notation}) of an SDS $X$ to be invariant with respect to an action of a Lie group $G$, see e.g. \cite{AF1995,Gal,Or22007,Mi1999,PoRo}. If it is the case then of course the SDS $X$ can be projected to the quotient manifold $M\slash G$. However, from the point of view of applications, one can not really distinguish between two different SDS's $X = X_0  +  \sum X_i \circ \dfrac{dB^{i}_t}{dt}$ and $Y = Y_0  + \sum Y_j \circ \dfrac{dB^{j}_t}{dt}$ if their associated diffusion processes are the same, i.e. they have the same diffusion operator $A_X = X_0 + \dfrac{1}{2}\sum X_i^2  = A_Y = Y_0 + \dfrac{1}{2}\sum Y_j^2$. Moreover, there is a symmetry breaking phenomenon: while many natural stochastic processes (e.g. the Brownian motion on $\mathbb{R}^n$) have a large symmetry group, when one writes them as SDS's then the corresponding vector fields admit very little symmetry. That's why  we think that it is more natural to do reduction of SDS's diffusion-wise, i.e. only up to diffusion equivalence. The main result of Subsection 2.2 (Theorem \ref{thm:proj}) says that, if an SDS is diffusion-wise invariant with respect to a proper action of a Lie group $G$ on a manifold $M$ then it can be projected diffusion-wise to an SDS on $M\slash G$. A famous classical example is the Bessel process $Z = \dfrac{n-1}{2r}\partial_r + \partial_r \circ \dfrac{dB_t}{dt}$ on $\mathbb{R}_+$ ($n\in \mathbb{N}, r$ is the coordinate on $\mathbb{R}_+$), which is the diffusion-wise reduction of the Brownian motion of $\mathbb{R}^n$ with respect to the natural $SO(n)$-action.

In Subsection 2.3 we show that, unlike the deterministic case and the strict symmetry case, underlying geometric structures of an SDS are not conserved under diffusion-wise reduction in general. For example, it may happen that a reduced system of a Hamiltonian SDS does not admit any Hamiltonian structure. Subsection 2.4 contains a concrete example of diffusion-wise symmetry and reduction, namely the damped stochastic oscillator with respect to a natural $SO(2)$-action. Finally, in Subsection 2.5, we address the problem of  ``lost variables", i.e. the projectability of an SDS with respect to an arbitrary surjective map.

In Section 3 of this paper, we develope the notion of integrability of SDS's, taking hints from both the classical and quantum mechanics. Roughly speaking, an SDS on a manifold will be called integrable of type $(p,q,r)$ if there is a complete commuting family of $p$ diffusion operators, $q$ vector fields, and $r$ strong first integrals associated to it, with $p+q+r=\dim M$. Among other results, we show in this section an analogue of the classical Liouville's theorem, which says that under some mild assumptions, there is a natural torus action (called the \emph{Liouville torus action}) which preserves the system (Theorem \ref{thm:316}). Like in the classical case, this Liouville torus action is very important, because it has the fundamental structure-preserving property (see Theorem \ref{thm:action} and Theorem \ref{thm:actiondiff}), allows one to find action-angle coordinates and write down semi-local normal forms (see Theorem \ref{thm:0qr} and Theorem \ref{thm:1qr} for systems of types $(0,q,r)$ and $(1,q,r)$). Restricting integrable SDS's to common level sets of first integrals and reducing them with respect to Liouville torus actions, we get integrable systems of type $(p,0,0)$. In Subsection 3.5, we show how integrable systems of type $(p,0,0)$, i.e. given by $m$ commuting diffusion operators are related to compatible families of integrable Riemannian matrics on the manifold. Finally, in Subsection 3.6, the last subsection of this paper, we formulate a conjecture about reduced integrability of SDS's, which we do not know how to prove at this time.
\section{Stochastic dynamical systems  and their reduction}

\subsection{First integrals and invariant submanifolds}
Let us recall some definitions of first integrals and invariant submanifolds for stochastic dynamical systems (SDS), see, e.g.,\cite{Bismut1981,Or22007}.



\begin{defn}
Consider an SDS: \begin{equation}X = X_0 + \sum_{i=1}^k X_i \circ \dfrac{dB_{t}^i}{dt}\end{equation} on a manifold $M.$
 
i) A function $F: M \to \mathbb{R}$ is called a \textbf{strong first integral} of $X$ if $F$ is invariant with respect to $X_0, X_1,\hdots, X_k $, i.e. \begin{equation}X_0(F) = X_1(F) = \hdots = X_k(F) = 0.\end{equation}

ii) A function $F: M \to \mathbb{R}$ is called a \textbf{weak first integral} of $X$ if $F$ is invariant with respect to  the generator $\displaystyle A_X = X_0 + \dfrac{1}{2}\sum_{i=1}^k X_{i}^2$ of $X$, i.e. \begin{equation}A_X(F) = 0.\end{equation}

iii) A diffusion morphism $F: (M,X)  \to (\mathbb{R}, Z),$ where $Z$ is an SDS on $\mathbb{R}$, is called a \textbf{stochastic first integral} of $(M,X)$. In other words, $X$ can be projected to an one-dimensional system via $F$.

iv) A submanifold $N \subset M$ is called \textbf{invariant} with respect to $X$ if for any $x \in N$ we have $X_0(x), X_1(x), \hdots, X_k(x) \in T_x N,$ i.e all the vector fields $X_0, X_1,\hdots, X_k$ are tangent to $N.$ In other words, if $x \in N$ then almost surely we have $\phi_t(\omega,x) \in N$ for every $t$.
\end{defn}
We have the following characterization of strong first integrals:
\begin{prop}
A function $F: M \to \mathbb{R}$ is a strong first integral of an SDS $\displaystyle X = X_0  + \sum_{i=1}^k X_i \circ \dfrac{dB_{t}^i}{dt}$ if and only if \begin{equation}[A_X, F] = 0,\end{equation}
where in the above equation, $F$ is considered as a zeroth-order differential operator, i.e. multiplication by $F,$ and $A_X = X_0 + \dfrac{1}{2}\sum_{i=1}^k X_{i}^2$ is the diffusion operator of $X.$
\end{prop}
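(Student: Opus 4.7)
The proof should hinge on computing the commutator $[A_X,F]$ explicitly as a differential operator, and then reading off its zeroth- and first-order parts separately. The plan is as follows.

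First I would carry out the Leibniz-rule computation: for any test function $g \in C^2(M,\mathbb{R})$, using $X_0(Fg) = X_0(F)g + FX_0(g)$ and $X_i^2(Fg) = X_i^2(F)g + 2X_i(F)X_i(g) + FX_i^2(g)$, one obtains the identity
\begin{equation}
[A_X,F]\,g \;=\; A_X(Fg) - F A_X(g) \;=\; A_X(F)\cdot g \;+\; \sum_{i=1}^k X_i(F)\,X_i(g).
\end{equation}
Equivalently, as a first-order differential operator,
\begin{equation}
[A_X,F] \;=\; A_X(F)\cdot \mathrm{id} \;+\; \sum_{i=1}^k X_i(F)\,X_i .
\end{equation}
This is the central formula; everything else is a direct consequence.

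The implication ``strong first integral $\Rightarrow [A_X,F]=0$'' is then immediate: if $X_0(F)=X_1(F)=\cdots=X_k(F)=0$ then each $X_i^2(F) = X_i(X_i(F)) = 0$, so $A_X(F)=0$, and every coefficient in the formula above vanishes. For the converse, I would assume $[A_X,F]=0$ and extract information by evaluating on specific functions. Plugging in $g \equiv 1$ kills the first-order part and yields $A_X(F)=0$; subtracting this off leaves $\sum_i X_i(F)\,X_i(g) = 0$ for every $g$. The key trick is to now plug in $g = F$ itself, which gives
\begin{equation}
\sum_{i=1}^k \bigl(X_i(F)\bigr)^2 \;=\; 0,
\end{equation}
forcing $X_i(F)=0$ for each $i=1,\dots,k$ pointwise (since these are real-valued functions). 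Once the $X_i(F)$ all vanish, we have $X_i^2(F)=0$ as well, and $X_0(F) = A_X(F) - \tfrac12\sum_i X_i^2(F) = 0$ finishes the argument.

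I do not anticipate a serious obstacle here: the result is essentially an algebraic identity about how a second-order operator with a ``sum of squares'' shape commutes with multiplication operators. The only subtle point worth flagging is the converse direction, where one might naively try to conclude $X_i(F)=0$ directly from the vanishing of the vector field $\sum_i X_i(F)X_i$; without linear independence of the $X_i$ this step fails, and the trick of testing against $g=F$ (exploiting positivity of squares) is what makes the argument go through cleanly.
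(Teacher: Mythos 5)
Your argument is correct and complete. Note that the paper states this proposition without giving any proof (it is left as a straightforward fact), so there is no authorial argument to compare against; your write-up supplies exactly the computation the authors leave implicit. The central identity $[A_X,F] = A_X(F)\cdot\mathrm{id} + \sum_{i=1}^k X_i(F)\,X_i$, obtained by the Leibniz rule applied to $X_0$ and to each $X_i^2$, is right, and the forward implication follows immediately. Your handling of the converse is the part that genuinely needs care, and you do it properly: evaluating on $g\equiv 1$ isolates $A_X(F)=0$, and testing the remaining first-order part against $g=F$ gives $\sum_i \bigl(X_i(F)\bigr)^2 = 0$ pointwise, hence $X_i(F)=0$ for every $i$ without any linear-independence assumption on the $X_1,\hdots,X_k$ --- a point you correctly flag, since the naive conclusion from the vanishing of the vector field $\sum_i X_i(F)X_i$ alone would be unjustified. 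Recovering $X_0(F)=0$ by subtracting $\tfrac12\sum_i X_i^2(F)=0$ from $A_X(F)=0$ closes the argument.
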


We observe that the notions of strong first integral, weak first integral, and invariant submanifold depend only on the diffusion equivalence class of SDS. More precisely, we have the following theorem, whose proof is straightforward:
\begin{thm}
Let $X = X_0 + \sum_{i=1}^k X_i \circ \dfrac{dB_{t}^i}{dt}$ and $Y = Y_0 + \sum_{i=1}^l Y_i \circ \dfrac{dW_{t}^i}{dt}$ be two SDS on a manifold $M$ which are diffusion equivalent. Then we have: 

i) A function $F: M \to \mathbb{R}$ is a strong first integral of $X$ if and only if it is a strong first integral of $Y$.

ii) A function $F: M \to \mathbb{R}$ is a weak first integral of $X$ if and only if it is a weak first integral of $Y.$

iii) A submanifold $N \subseteq M$ is invariant with respect to $X$ if and only if it is invariant with respect to $Y.$
\end{thm}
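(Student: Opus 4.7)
The proof proposal is as follows. The statements (i) and (ii) are easy and I would dispose of them first; (iii) is the substantive one and requires a short symbol-level argument.

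For (ii), the defining condition $A_X(F)=0$ refers only to the diffusion operator, so since $A_X=A_Y$ the equivalence is immediate.

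For (i), I would invoke the characterization given in Proposition 2.3: $F$ is a strong first integral of $X$ if and only if $[A_X,F]=0$, and similarly for $Y$. Since $A_X=A_Y$ as operators, $[A_X,F]=[A_Y,F]$, and the equivalence follows at once. (Alternatively, one can argue directly: from $A_X=A_Y$ and the hypothesis $X_i(F)=0$ for every $i$, first apply $A_X=A_Y$ to the test function $F^2$ to compare the quadratic parts, extract $\sum_i X_i(F)^2 = \sum_j Y_j(F)^2$, and then deduce $Y_j(F)=0$ for every $j$; the equality $Y_0(F)=0$ then falls out of $A_X(F)=A_Y(F)=0$. This is in fact the content behind Proposition 2.3.)

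Part (iii) is the main point. The key observation is that the principal symbol of $A_X$, read off as a symmetric $2$-tensor on $M$, is the \emph{diffusion tensor} $g_X := \sum_{i=1}^k X_i\otimes X_i$; the equality $A_X=A_Y$ forces $g_X=g_Y$ pointwise. Assume now that $N$ is invariant with respect to $X$, i.e.\ $X_0,X_1,\ldots,X_k$ are all tangent to $N$. For a point $p\in N$, the tensor $g_X(p)=\sum_i X_i(p)\otimes X_i(p)$ lives in $T_pN\otimes T_pN$; viewed as a linear map $T_p^*M\to T_pM$ its image is the span of $\{X_i(p)\}$, which lies in $T_pN$. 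By $g_X=g_Y$, the same holds for $g_Y(p)$, whose image is the span of $\{Y_j(p)\}$; hence $Y_j(p)\in T_pN$ for every $j$. It remains to show $Y_0(p)\in T_pN$. For this, rewrite
\begin{equation*}
Y_0 - X_0 \;=\; \tfrac{1}{2}\Bigl(\sum_i X_i^2 - \sum_j Y_j^2\Bigr),
\end{equation*}
which is a genuine vector field because the second-order parts cancel (the symbols agree). Applied to any smooth $f$ vanishing on $N$, each $X_i(f)$ vanishes on $N$ (since $X_i$ is tangent to $N$), hence $X_i^2(f)(p)=0$; the same argument, now using $Y_j(p)\in T_pN$, gives $Y_j^2(f)(p)=0$ for $p\in N$. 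Therefore $(Y_0-X_0)(f)(p)=0$ for every $f$ vanishing on $N$ and every $p\in N$, i.e.\ $Y_0-X_0$ is tangent to $N$. Combined with $X_0$ being tangent to $N$, this yields $Y_0(p)\in T_pN$, and $N$ is invariant for $Y$. By symmetry the converse holds, finishing the proof.

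The only step that is not entirely formal is the ``principal symbol equals diffusion tensor'' identification in (iii) and the attendant remark that $X_i$ tangent to $N$ implies $X_i^2$ preserves the vanishing ideal of $N$ to first order at points of $N$; once these are in hand, the argument is mechanical.
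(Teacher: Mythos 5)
Your proof is correct, and since the paper explicitly omits the argument (stating only that it is ``straightforward''), your write-up simply supplies the intended details: (ii) is immediate from $A_X=A_Y$, (i) follows from the bracket characterization $[A_X,F]=0$ (or equivalently from the identity $A_X(F^2)-2FA_X(F)=\sum_i X_i(F)^2$), and (iii) from the equality of principal symbols, which forces $\mathrm{span}\{X_i(p)\}=\mathrm{span}\{Y_j(p)\}$ at each $p\in N$, together with the observation that $Y_0-X_0=\tfrac12\bigl(\sum_i X_i^2-\sum_j Y_j^2\bigr)$ is a vector field killing, at points of $N$, every function vanishing on $N$. No gaps; this matches what the paper's statement presupposes.
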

Let us mention here another obvious proposition, which is the same as in the case of deterministic systems:
\begin{prop}
Let $F_1,\hdots,F_q : M \to \mathbb{R}$ be strong first integrals of an SDS $\displaystyle X = X_0 + \sum_{i=1}^k X_i \circ \dfrac{dB_{t}^i}{dt}$ on a manifold $M$. Assume that the level set \begin{equation}N = \{F_1 =c_1,\hdots, F_q = c_q\}\end{equation} (where $c_1,\hdots,c_q$ are constants) is a submanifold of $M$. Then $N$ is an invariant submanifold of $X$ on $M.$
\end{prop}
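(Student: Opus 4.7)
The proof is almost immediate from the definitions, so my plan has very little to do. The substantive content is that ``strong first integral'' already encodes tangency of every one of the vector fields $X_0, X_1, \ldots, X_k$ to the common level sets of $F_1, \ldots, F_q$, and ``invariant submanifold'' is defined precisely by this tangency condition. So the proof is a matching exercise between the two definitions.

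Concretely, I would argue as follows. By hypothesis, $X_j(F_i) = 0$ for every $j \in \{0, 1, \ldots, k\}$ and every $i \in \{1, \ldots, q\}$. Pick any point $x \in N$. The standard way to conclude $X_j(x) \in T_x N$ is via the flow: since $X_j(F_i) = 0$, each $F_i$ is constant along integral curves of $X_j$, so the deterministic flow $\phi^{X_j}_t$ emanating from $x$ stays inside the common level set $\{F_1 = c_1, \ldots, F_q = c_q\} = N$ for all small $t$. Differentiating at $t=0$ gives $X_j(x) = \frac{d}{dt}\big|_{t=0} \phi^{X_j}_t(x) \in T_x N$. Repeating for every $j = 0, 1, \ldots, k$ shows that all the generating vector fields of $X$ are tangent to $N$, which is the definition of $N$ being an invariant submanifold.

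If one prefers a pointwise-linear-algebra argument over a flow argument, the alternative is: since $N$ is a submanifold contained in $\{F_i = c_i\}$, any $v \in T_x N$ is the velocity at $0$ of a curve $\gamma$ in $N$, hence $dF_i(v) = \frac{d}{dt}|_0 F_i(\gamma(t)) = 0$, so $T_x N \subseteq \bigcap_i \ker dF_i|_x$. But under the usual regularity assumption (the $dF_i$ being independent at $x$, so that $N$ is cut out transversally with codimension $q$) this inclusion is an equality, and then $X_j(F_i)(x) = dF_i(X_j(x)) = 0$ for all $i$ forces $X_j(x) \in T_x N$.

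The only mild obstacle worth noting is the subtlety about whether $N$ is truly the full level set or just a component or regular piece thereof; the flow argument handles the first formulation, and the pointwise argument handles the second. In either formulation the proof is short enough that the paper's ``obvious proposition'' label is fully justified, and no appeal is needed to any stochastic input beyond the definitions recalled in Subsection~2.1.
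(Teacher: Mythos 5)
Your proof is correct, and the paper in fact gives no proof at all for this proposition (it is labelled as obvious, ``the same as in the case of deterministic systems''), so your flow argument is exactly the routine deterministic-tangency reasoning the paper implicitly invokes: $X_j(F_i)=0$ for all $j=0,\hdots,k$ forces each integral curve starting on $N$ to remain in the level set, hence $X_j(x)\in T_xN$, which is precisely the paper's definition of an invariant submanifold. Your remark that the pointwise kernel argument needs regularity of the $dF_i$ while the flow argument only needs $N$ to be an (embedded) submanifold is a fair and correct refinement of the statement as given.
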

\begin{remark}
Even if a deterministic system $X_0$ has a lot of first integrals, when stochastic terms $\displaystyle X_i \circ \dfrac{dB_{t}^i}{dt}$ are added to it, the resulting system $\displaystyle X = X_0 + \sum X_i \circ \dfrac{dB_{t}^i}{dt}$ will lose its first integrals. Especially, when the diffusion of the noise is nondegenerate, i.e. $A_X$  is elliptic, then there is no strong first integral at all. That's why one needs to weaken the notion of first integrals for SDS. Notice that a weak first integral is not necessarily a stochastic first integral, and vice versa a stochastic first integral is not necessarily a first integral. Given a stochastic first integral $F : (M,X) \to (\mathbb{R},Z)$, one can turn it into a weak first integral by composing it with a monotonous function $f: \mathbb{R} \to \mathbb{R}$ such that $f(Z)$ is a martingale process, so in a sense, stochastic integrals are stronger than weak first integrals. They are also more useful in reduction theory, because the morphism $F: (M,X) \to (\mathbb{R},Z)$ itself is a reduction to an 1-dimensional system. For example, in the works by Freidlin and other people, first integrals of a deterministic system become stochastic first integrals after a random perturbation and after taking an appropriate limit (also called variational equations), see, e.g., \cite{Fre1,Fre2}.
\end{remark}
\subsection{Reduction with respect to a symmetry group}

In this subsection, we study the reduction of an SDS on a manifold with respect to a Lie group action which preserves the system. First, let us recall the notion of morphisms between SDS's:
\begin{defn} Let $(M,X)$ and $(N,Y)$, where $X=X_0 + \sum_{i=1}^k X_i \circ \dfrac{dB_{t}^i}{dt}$ and $Y=Y_0 + \sum_{i=1}^m Y_i \circ \dfrac{dB_{t}^i}{dt}$ be two SDS on two manifolds M and N respectively.

i) A map $\Phi : (M, X) \to (N,Y)$ is called a \textbf{system morphism} if $m = k$ and $\Phi$ sends $X_i$ to $Y_i$ for all $i = 0,1,\hdots, k$, i.e for all $x \in M$ and $i = 0,1,\hdots, k$ we have \begin{equation}\Phi_{*}(X_i(x)) = Y_i{\Phi(x)}.\end{equation}
ii) A map $\Phi : (M, X) \to (N,Y)$ is called a  \textbf{diffusion morphism} if $\Phi$ sends the diffusion generator $A_X= X_0 + \dfrac{1}{2} \sum X^{2}_i$ of $X$ to the diffusion generator $A_Y = Y_0 + \dfrac{1}{2} \sum Y^{2}_i$ of $Y$, i.e for any function $f$ on $N$ we have \begin{equation}A_X(\Phi^*(f)) = \Phi^*(A_Y(f)).\end{equation}
\end{defn}

Of course, a system morphism is also a diffusion morphism, but the converse is not true in general. For example, if two different SDS on a manifold $M$ are diffusion equivalent, then the identity map is a diffusion morphism, but not a system morphism between them. Since system morphisms are in many cases too restrictive, we will often work with diffusion morphisms instead.

Let $\rho : G \curvearrowright M$ be an (effective) action of a Lie group $G$ on $M$. We will assume that either $G$ is compact, or $G$ is non-compact but the action is proper, so that the induced topology on the quotient space $M \slash G$ is Hausdorff. 

Recall that an SDS $X = X_0 + \sum X_i \circ \dfrac{dB^{i}_t}{dt} $ is called \textbf{invariant with respect to $G$} if all the vector fields $X_0, X_1,\hdots, X_k$ are invariant with respect to $G$. In other words, for every $g \in G$, the map $\rho_g: (M,X) \to (M,X)$ of the action $\rho$ of $G$ is a system isomorphism. In this case, it is well-known that $X= X_0 + \sum X_i \circ \dfrac{dB^{i}_t}{dt}$ can be projected to an SDS $Z = Z_0 + \sum Z_i \circ \dfrac{dB^{i}_t}{dt}$ on $M \slash G$ where $Z_i = \mbox{proj}_{M\slash G} (X_i)$ is the projection of $X_i$ on $M \slash G$ for each $i = 0,\hdots,k$. This is the starting point of the reduction theory of SDS's,  see, e.g.,\cite{AF1995,Gal,Or22007,Liao2008,Mi1999,PoRo}.

It may happen however that $G$ does not preserve the system $X = X_0 + \sum X_i \circ \dfrac{dB^{i}_t}{dt}$ but it preserves the diffusion process of the system, i.e. the diffusion generator $X_0 + \dfrac{1}{2} \sum X_{i}^2$ is invariant with respect to $G$. We still want to do reduction in this case.

\begin{defn}
We will say that an SDS $X = X_0  + \sum X_i \circ \dfrac{dB^{i}_t}{dt}$ on a manifold $M$ is \textbf{diffusion invariant} with respect to an action $\rho$ of a Lie group $G$ on $M$ if the operator $A_X = X_0 + \dfrac{1}{2} \sum X_{i}^2$ is invariant with respect to $G$. In other words,  for every $g \in G$, the map $\rho_g : (M,X) \to (M,X)$ of the action is a diffusion isomorphism.
\end{defn}

\begin{example} Put $X_0 = x \partial_y - y \partial_x, X_1 = \partial_x, X_2 = \partial_y$ on $\mathbb{R}^2$. Then the system $X = X_0 + X_1\circ \dfrac{dB_{t}^1}{dt} + X_2 \circ \dfrac{dB_{t}^2}{dt}$ is not invariant with respect to the rotation group $SO(2)$ but it is diffusion invariant with respect to $SO(2)$.\end{example}

A natural question  arises: given a system $X = X_0 + \sum X_i \circ \dfrac{dB^{i}_t}{dt}$ which is diffusion invariant with respect to $G$, does there exist a system $Y = Y_0 + \sum Y_j \circ \dfrac{dW^{j}_t}{dt}$ which is invariant with respect to $G$ and which is diffusion equivalent to $X$? 

In some cases, the answer is YES, but unfortunately, in many cases, the answer is NO, especially if the group $G$ is ``too big''. This phenomenon may be viewed as a {\bf symmetry breaking phenomenon}.

\begin{example}Let $M = S^{n-1} \subset \mathbb{R}^n$ be the unit $(n-1)$-dimendional sphere, $G = SO(n)$ which acts on $M$ by rotations. Then there is no non-trivial $SO(n)$-invariant vector field on $S^{n-1}$, but the Brownian motion on  $S^{n-1}$ (with respect to the usual metric) is of course $SO(n)$ invariant. In other words, the Brownian motion on  $S^{n-1}$  is $SO(n)$ invariant as a diffusion process, and it can be generated by an SDS, but there is no $SO(n)$ invariant SDS associated to it. \end{example}

Nevertheless, we can still do reduction, up to diffusion equivalence, of systems which are diffusion symmetric (i.e diffusion invariant with respect to a group action), as the following theorem shows:

\begin{thm} \label{thm:proj}
Let $X = X_0 + \sum_{i=1}^k X_i \circ \dfrac{dB_{t}^i}{dt}$ be an SDS on a manifold $M$ which is diffusion invariant with respect  to an action $\rho$ of  a compact Lie group $G$ on $M$. Then on the regular part of the quotient space $M \slash G$ there exists an SDS $Z = Z_0 + \sum_{i=1}^m Z_i \circ\dfrac{dW_{t}^i}{dt}$ for some $m \in \mathbb{N}$ whose generator $Z_0 + \dfrac{1}{2} \sum_{i=1}^m Z_{i}^2$ is the projection of the operator $X_0 + \dfrac{1}{2}\sum_{i=1}^k X_{i}^2,$ i.e. the projection map proj: $(M,X) \to (M\slash G, Z)$ is a diffusion morphism.
\end{thm}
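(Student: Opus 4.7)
The plan is to descend the diffusion operator $A_X$ to $(M/G)_{\text{reg}}$ and then realize the descended operator as the generator of some SDS. Throughout, let $\pi : M \to M/G$ denote the quotient map and let $M_{\text{reg}} = \pi^{-1}((M/G)_{\text{reg}})$.

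First, I would show that $A_X$ descends to a well-defined second-order linear differential operator $\bar{A}$ on $(M/G)_{\text{reg}}$. For any $f \in C^\infty((M/G)_{\text{reg}})$, the pullback $\pi^* f$ is $G$-invariant on $M_{\text{reg}}$, and by hypothesis $A_X$ commutes with the $G$-action, so $A_X(\pi^* f)$ is $G$-invariant and hence of the form $\pi^* \bar{A}(f)$ for a unique smooth $\bar{A}(f)$. In slice coordinates around a principal orbit one verifies directly that $\bar{A}$ is a second-order differential operator with $\bar{A}(1) = 0$ (since $A_X(1) = 0$). Its principal symbol, obtained by descent from the non-negative symmetric tensor $\sigma_{A_X} = \tfrac{1}{2}\sum_i X_i \otimes X_i$ (which is tensorially $G$-invariant because $A_X$ is), is the non-negative symmetric tensor $\bar{\sigma}$ on $(M/G)_{\text{reg}}$ given at $\bar{x} = \pi(x)$ by $\tfrac{1}{2}\sum_i d\pi(X_i(x)) \otimes d\pi(X_i(x))$.

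Next, I would write $\bar{\sigma}$ as a finite sum of squares $\bar{\sigma} = \tfrac{1}{2}\sum_{j=1}^m Z_j \otimes Z_j$ of smooth vector fields on $(M/G)_{\text{reg}}$. Locally this admits a direct construction via slices: fix a $G$-invariant Riemannian metric on $M$ (obtained by averaging any metric over the compact group $G$), and near a principal orbit pick a slice $\Sigma \subset M$ so that $\pi|_\Sigma$ is a diffeomorphism onto a chart $U \subset (M/G)_{\text{reg}}$. Decompose each $X_i|_\Sigma = X_i^\parallel + X_i^\perp$ into its slice-tangential and orbit-tangential parts, and set $Z_i^{(\alpha)} := (\pi|_\Sigma)_*(X_i^\parallel)$, which are smooth vector fields on $U$. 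Since $d\pi(X_i) = d\pi(X_i^\parallel)$, one finds $\bar{\sigma}|_U = \tfrac{1}{2}\sum_i Z_i^{(\alpha)} \otimes Z_i^{(\alpha)}$. Taking a locally finite cover $\{U_\alpha\}$ of $(M/G)_{\text{reg}}$ of this form together with smooth functions $\psi_\alpha$ supported in $U_\alpha$ satisfying $\sum_\alpha \psi_\alpha^2 = 1$, one obtains the global representation $\bar{\sigma} = \tfrac{1}{2}\sum_{\alpha,i}(\psi_\alpha Z_i^{(\alpha)}) \otimes (\psi_\alpha Z_i^{(\alpha)})$, which after relabelling becomes the desired $Z_1, \ldots, Z_m$; finiteness of $m$ can be arranged by using a cover of $(M/G)_{\text{reg}}$ with boundedly many colors.

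Finally, with the $Z_j$ in hand, the operator $\bar{A} - \tfrac{1}{2}\sum_j Z_j^2$ is a first-order operator annihilating the constant $1$, hence equals a smooth vector field $Z_0$; the SDS $Z = Z_0 + \sum_j Z_j \circ \dfrac{dW_t^j}{dt}$ then has generator $A_Z = \bar{A}$, so $\pi : (M,X) \to ((M/G)_{\text{reg}}, Z)$ is a diffusion morphism by construction. I expect the main obstacle to be the global sum-of-squares decomposition: the naive matrix square-root of $\bar{\sigma}$ generally fails to be smooth at points where the rank of $\bar{\sigma}$ drops, and this is why the explicit slice construction (which produces smooth factors directly from the smooth vector fields $X_i$) combined with a smooth squared partition of unity is needed to make the decomposition genuinely smooth on all of $(M/G)_{\text{reg}}$.
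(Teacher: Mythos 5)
Your proposal is correct and follows essentially the same route as the paper's proof: descend $A_X$ to a second-order operator on the regular part of $M/G$, use local sections (slices) to push the $X_i$ down to local vector fields, patch them with a partition of unity to match the principal symbol, and absorb the remaining first-order difference into the drift $Z_0$. Your use of a squared partition of unity $\sum_\alpha \psi_\alpha^2 = 1$ is just a slightly cleaner version of the paper's choice $Z_{k,i} = \sqrt{f_k}\,\mbox{proj}_{\mathcal{S}_k}(X_i)$, sidestepping the smoothness of $\sqrt{f_k}$; otherwise the arguments coincide.
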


\begin{proof}
First let us verify that $X_0 + \dfrac{1}{2}\sum_{i=1}^k X_{i}^2$ projects to a second-order differential operator on (the regular part of) $M \slash G$.

Let $f : M\slash G \to \mathbb{R}$ be a smooth function and denote by $\pi : M \to M\slash G$ the projection map. Then $\pi^* f$ is $G$-invariant. Since $A_X = X_0 + \dfrac{1}{2}\sum_{i=1}^k X_{i}^2$ is $G$-invariant, $A_X (\pi^* f)$ is also $G$-invariant, i.e. there is a unique function $ \hat{f}$ on $ M \slash G$ such that $A_X (\pi ^* f) = \pi ^* \hat{f}$. One then checks that, since $A_X$ is a second-order differential operator, the map $f \mapsto \hat{f}$ is also given by a second-order differential operator (i.e. in local coordinates, it depends on derivatives up to the second order only). 

Let us now prove the existence of $Z_0 + \sum_{i=1}^m Z_i \circ \dfrac{dW_{t}^i}{dt}$. First consider the case when there is  a global section $\mathcal{S} \subseteq M$ to the foliation by the orbits of $G$ in $M$. Then $M \slash G$ can be identified with $\mathcal{S}$. 

For each $i = 1,\hdots, k$ denote by $Z_i = \mbox{proj}_{\mathcal{S}}(X_i)$ the projection of the restriction of $X_i$ to $\mathcal{S}$ on $M \slash G$ along the orbits of $G$. More precisely, at each $y \in M \slash G$ denote by $y_{\mathcal{S}} \in \mathcal{S}$ the point in $\mathcal{S}$ such that $\Pi(y_{\mathcal{S}}) = y$ and put \begin{equation}Z_i(y) = \pi_{*}(X_i(y_{\mathcal{S}})).\end{equation}

Then one verifies easily that $\dfrac{1}{2}\sum_{i=1}^k Z_{i}^2$ has the same principal symbol as the projection $\mbox{proj}_{M\slash G}(X_0 + \dfrac{1}{2}\sum X_{i}^2)$ of $X_0 + \dfrac{1}{2}\sum X_{i}^2$ on $ M \slash G$. Hence the difference between these two differential operators is an order 1 differential operator (without zeroth order terms), i.e. a vector field, which we will denote by $Z_0$. Then $Z_0 + \sum Z_i \circ \dfrac{dB_{t}^i}{dt}$ will satisfy the conditions of the theorem.

In general, due to global topological obstructions, such a global section does not necessarily exist, but local sections exist, and we can use a partition of unity to construct our system on $M \slash G$ as follows:

Let $M\slash G = \bigcup_{k} U_k$ be a finite covering of $M$ by open (not necessarily connected) sets, together with a partition of unity $\sum f_k = 1$, where $f_k : M\slash G \to [0,1]$ is a function on $M\slash G$ whose support lies inside $U_k$, and such that over $U_k$ there is a section $\mathcal{S}_k$ to the $G$- foliation on $M$. Put \begin{equation}Z_{k,i} = \sqrt{f_k}\mbox{proj}_{\mathcal{S}_k}(X_i)\end{equation} (and extend it to the whole $M\slash G$ by putting it equal to $0$ outside $U_k$). Then $\dfrac{1}{2} \sum_{k,i} Z_{k,i}^2$ has the same principal symbol as $\mbox{proj}_{M\slash G}(X_0 + \sum X_{i}^2)$, and so there exists a vector field $Z_0$ on the regular part of $M \slash G$ such that $Z_0 + \sum_{k,i} Z_{k,i} \circ \dfrac{dW_{t}^{k,i}}{dt}$ satisfies the conditions of the theorem.
\end{proof}

\begin{defn} The system $Z_0 + \sum_{k,i} Z_{k,i} \circ \dfrac{dW_{t}^{k,i}}{dt}$ in Theorem \ref{thm:proj} will be called the \textbf{projection up to diffusion equivalence} of the system $X_0 + \sum X_i \circ \dfrac{dB_{t}^i}{dt}$ from $M$ to the quotient space $M\slash G $. \end{defn}

Of course, any other SDS on $M\slash G$ which is diffusion equivalent to $Z_0+ \sum Z_j \circ\dfrac{dW_{t}^j}{dt}$ will also be a projection of $X_0 + \sum X_i \circ\dfrac{dB_{t}^i}{dt}$  from $M$ to $M\slash G$ up to diffusion equivalence. Among all these diffusion equivalent systems, one may try to find a ``normal form'', i.e. a system whose expression is simplest possible.

\begin{remark} The space $M\slash G$ is an orbit space which is a stratified manifold with singularities in general. At the singularities of $M\slash G$, the projected SDS may also show a singular behavior, e.g. it may blow up.\end{remark}
\begin{example}

Consider the Brownian motion on $\mathbb{R}^n$ generated by $X = \sum_{i=1}^n \partial_{x_i} \circ \dfrac{dB_{t}^i}{dt}$. This system in diffusion-invariant with respect to the natural action of the group $SO(n)$. The quotient space is $\mathbb{R}^n\slash SO(n) \cong \mathbb{R}_{+}$ with the radial coordinate $r = \sqrt{\sum x_{i}^2}$. Simple calculations similar to the ones given above show that the reduced system on $\mathbb{R}_{+}$ is \begin{equation}\label{eq:Z}Z= \dfrac{n-1}{2r} \partial_r + \partial_r \circ \dfrac{dB_t}{dt}.\end{equation} In the literature, the diffusion process of the SDS $Z$ given by Formula \eqref{eq:Z} is called the \textbf{Bessel process}, and it is also defined as the process of $||W_n||$, where $W_n$ is an $n$-dimensional Brownian motion on $\mathbb{R}^n$, see, e.g., \cite{Ok2003}.
\end{example}
It is well-known that the Brownian motion (or more generally, any $SO(n)$-invariant Markov process) on $\mathbb{R}^n$ can be written as a semi-direct product of the Bessel process (or another process on $\mathbb{R}_{+})$ with a time-changed Brownian motion (or a more general time-changed process) on the sphere $\mathbb{S}^{n-1}$, see Galmarino \cite{Gal}. The semi-direct product result of Galmarino has been generalized by Pauwels and Rogers \cite{PoRo} and other people to a more general situation of a Markov process which is invariant with respect to an action of a Lie group $G$, under the hypothesis that there exists a global section to the foliation by the orbits of the action.

In our setting of an SDS X which is diffusion invariant with respect to a symmetry group $G$, this semi-direct product, also known as the decomposition of $X$ into the sum of an angular part (tangent to the orbits of the group action) with a radial part (transversal to the orbits) can be seen as follows:

\quad In the proof of Theorem \ref{thm:proj}, assume that there is a global section $\mathcal{S} \subseteq M$. Each vector field $Z_i$ on $\mathcal{S} \subseteq M, (i = 0,1,\hdots,k)$ can then be turned into a vector field on $M$ by left translation with respect to the action of $G$ and then by averaging over the isotopy group at each point. By doing so, we get a vector field $V_i$ on $M$ which is $G-$invariant and which projects to $Z_i$ on $\mathcal{S}$. One then verifies that the difference $L = (X_0 + \dfrac{1}{2}\sum X_{i}^2) - (V_0 + \dfrac{1}{2}\sum V_{l}^2)$ is tangent to the orbits of $G$, in the sense that for any function $f$ on $M$ and any $x \in M$, the value $L(f)(x)$ depends only on the restriction of $f$ to the orbits through $x$ of the action of $G$. Assuming that we can write $L = U_0 + \dfrac{1}{2}\sum_{j} U_{j}^2$, then $X = X_0 + \sum X_i \circ \dfrac{dB_{t}^i}{dt}$ is diffusion equivalent to the sum of $U_0 + \sum_{j} U_j \circ \dfrac{dB_{t}^j}{dt}$ and $V_0 + \sum_{l} V_l \circ \dfrac{dB_{t}^l}{dt}$. The parts $U_0 + \sum U_j \circ \dfrac{dB_{t}^j}{dt}$ and $V_0 + \sum V_l \circ \dfrac{dB_{t}^l}{dt}$ are called the \emph{angular part} and the \emph{radial part} of $X$, respectively.
\subsection{Structure-preserving SDS}
Besides having symmetry groups, deterministic systems may also preserve various geometric structures, e.g. volume forms, symplectic structures, Poisson structures, contact structures, etc. A natural question arises:  what are their stochastic analogs? What kind of SDS can also be said to preserve some geometric structure or to have some properties related to that structure?

For Hamiltonian systems, this question was studied by Bismut \cite{Bismut1981} and other people. One says that an SDS $\displaystyle X = X_0 + \sum_{i=1}^k X_i \circ \dfrac{dB_{t}^i}{dt}$ on a symplectic manifold $(M,\omega)$ is a Hamiltonian SDS if the vector fields $X_0, X_1,\hdots, X_k$ are Hamiltonian, i.e. there are $k+1$ functions $H_0, H_1,\hdots, H_k : M \to \mathbb{R}$ such that $X_i = X_{H_i}$ is the Hamiltonian vector field of $H_i$ for each $i$. Bismut \cite{Bismut1981} showed that the random flow of a Hamiltonian SDS preserves the symplectic structure almost surely. In fact, he proved the following more general theorem.

\begin{thm}[Bismut \cite{Bismut1981}]\label{thm:Bismut}
Let $\Lambda$ be an arbitrary smooth tensor field on a manifold $M$, i.e $\Lambda \in \Gamma (\bigotimes^p T^{*}M \bigotimes^q TM)$ for some $p,q \geq 0$, which is invariant with respect to the vector fields $X_0, X_1,\hdots, X_k$. Then the random flow of the SDS $X = X_0 + \sum_{i=1}^k X_i \circ \dfrac{dB_{t}^i}{dt}$ preserves $\Lambda$ almost surely.
\end{thm}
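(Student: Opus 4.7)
The plan is to exploit the defining feature of the Stratonovich formulation noted earlier in the paper, namely that Stratonovich SDEs obey the ordinary chain rule. Let $\phi_t : M \to M$ denote the (random) stochastic flow of $X$; that is, for each $\omega$ in the sample space, $\phi_t(\omega,\cdot)$ is the a.s.~well-defined diffeomorphism obtained by solving (\ref{eq:Stratonovich}). I would first verify that, as a consequence of the Stratonovich chain rule applied component-wise in local coordinates, the pullback $\phi_t^{*}\Lambda$ of the tensor field $\Lambda$ satisfies the stochastic Stratonovich equation
\begin{equation}
d(\phi_t^{*}\Lambda) \;=\; \phi_t^{*}(\mathcal{L}_{X_0}\Lambda)\,dt \;+\; \sum_{i=1}^{k}\phi_t^{*}(\mathcal{L}_{X_i}\Lambda)\circ dB_t^{i},
\end{equation}
with the initial condition $\phi_0^{*}\Lambda = \Lambda$. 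This is the stochastic analogue of the classical Lie-derivative formula $\tfrac{d}{dt}\phi_t^{*}\Lambda = \phi_t^{*}(\mathcal{L}_Y\Lambda)$ for the flow of a deterministic vector field $Y$, and it works in the Stratonovich setting precisely because no Itô correction terms appear.

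Once this formula is in hand, the hypothesis $\mathcal{L}_{X_i}\Lambda = 0$ for $i=0,1,\dots,k$ (which is the meaning of ``$\Lambda$ is invariant with respect to the $X_i$'') forces the right-hand side to vanish identically. Hence $\phi_t^{*}\Lambda$ is a stochastic process with zero Stratonovich differential, and the pathwise uniqueness of solutions to Stratonovich SDEs (applied to the trivial equation $dY=0$ with initial value $\Lambda$ in the finite-dimensional fibre bundle $\bigotimes^{p}T^{*}M\otimes\bigotimes^{q}TM$) gives $\phi_t^{*}\Lambda = \Lambda$ for all $t$ almost surely, which is precisely the conclusion.

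The step I expect to be the main technical obstacle is the first one, namely a careful justification of the Stratonovich Lie-derivative formula for tensor fields along the stochastic flow. In local coordinates this reduces to showing that the coefficients of $\phi_t^{*}\Lambda$ and the partial derivatives of $\phi_t$ (which together determine how $\Lambda$ transforms as a tensor) satisfy the corresponding Stratonovich SDEs obtained by formal differentiation; the key point is to commute $\phi_t^{*}$ with $\partial_{x^j}$, which in turn requires the smoothness of $(t,x)\mapsto\phi_t(\omega,x)$ jointly in $x$ with stochastic parameter $\omega$. I would appeal to the well-known smooth dependence of stochastic flows on initial data (see, e.g., the references \cite{Bismut1981,Ku1997} already cited in the paper), which yields $C^{\infty}$ regularity of $\phi_t$ in $x$ a.s. and legitimates differentiation under the Stratonovich integral. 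With that in place, the formula and hence the conclusion follow exactly as in the classical deterministic case, one vector field at a time.
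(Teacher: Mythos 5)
Your argument is correct and is essentially the standard proof of this result: the paper itself gives no proof (it states the theorem as Bismut's and cites \cite{Bismut1981}), and the route you take --- the Stratonovich transport formula $d(\phi_t^{*}\Lambda)=\phi_t^{*}(\mathcal{L}_{X_0}\Lambda)\,dt+\sum_i\phi_t^{*}(\mathcal{L}_{X_i}\Lambda)\circ dB_t^i$ together with smoothness of the stochastic flow in the initial point (Bismut, Kunita) --- is precisely the argument in those references. The only cosmetic remark is that once the right-hand side vanishes you do not even need a uniqueness argument: integrating the identity from $0$ to $t$ gives $\phi_t^{*}\Lambda=\Lambda$ directly.
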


In particular, if $\Pi$ is a Poisson tensor, $f_0,\hdots,f_n : M \to \mathbb{R}$ are functions, and $X_i = df_i \lrcorner \Pi$ are their Hamiltonian vector fields with respect to $\Pi$, then the stochastic Hamiltonian system $X = X_0 + \sum_{i=1}^k X_i \circ \dfrac{dB_{t}^i}{dt}$ preserves the Poisson structure $\Pi$ according to Bismut's Theorem \ref{thm:Bismut}, because every vector field $X_i$ does. The symplectic leaves of the Poisson manifold $(M,\Pi)$ are invariant submanifolds of $X,$ and the Casimir functions of $(M,\Pi)$ are strong first integrals of $X$.
 
If an SDS $X$ on a manifold $M$ preserves a multi-vector field $\Lambda$, and both $X$ and $\Lambda$ are invariant with respect to a group action of a Lie group $G$ on $M$, then it is natural that the projection of $X$ to $M\slash G$ also preserves the projection of $\Lambda$ on $M\slash G$. In particular, in the case of  invariant stochastic Hamiltonian systems (where $\Lambda$ is the Poisson tensor), reduction theorems were obtained by Lazaro-Cami and Ortega \cite{Or22007}.


Notice that the structure-preserving property is not invariant under the diffusion equivalence: if $X$ and $Y$ are two diffusion equivalent SDS's, and $X$ preserves a tensor field $\Lambda$, it does not imply at all that the component vector fields of $Y$ also preserve $\Lambda$. Moreover, the reduction with respect to a diffusion-wise symmetry group may destroy the structure-preserving nature of a system. For example, it may happen that the original system is Hamiltonian, but the reduced system is not diffusion-equivalent to any Hamiltonian system.
\begin{example}
Consider the Hamiltonian SDS $\displaystyle X = X_h + \sum \partial_{x_i} \circ \dfrac{dB_{t}^i}{dt} + \sum \partial_{y_i} \circ \dfrac{dB_{t}^{i+n}}{dt}$
 on $(\mathbb{R}^{2n}, \omega = \sum dx_i \wedge dy_i),$ where $h = \dfrac{1}{2}\sum(x_{i}^2 + y_{i}^2)$, and $X_h$ generates a Hamiltonian $\mathbb{T}^{1}$-action on $\mathbb{R}^{2n}$ which preserves $X$ diffusion-wise. 
The reduced phase space is $\mathbb{R}^{2n}\slash \mathbb{T}^{1} \cong \mathbb{C}\mathbb{P}^{n-1} \times \mathbb{R}_{+}$ with symplectic leaves $\mathbb{C}\mathbb{P}^{n-1} \times \{pt\}$. Notice that the reduced symplectic forms on these sympletic leaves are not cohomologous to each other  and so a leaf can't be sent to a different leaf by a Poisson diffeomorphism, therefore any stochastic dynamical system on $\mathbb{R}^{2n}\slash \mathbb{T}^{1}$ which preserves the Poisson structure must also preserve each symplectic leaf. On the other hand, any such symplectic leaf will not be invariant with respect to the reduction of $X$ to $\mathbb{R}^{2n}\slash \mathbb{T}^1$, because its preimage under the projection map $\mathbb{R}^{2n} \to \mathbb{R}^{2n} \slash \mathbb{T}^1$ is a sphere $\mathbb{S}^{2n-1}$, 
which is of course not preserved by $X$ due to the noise term $\displaystyle \sum \partial_{x_i} \circ \dfrac{dB_{t}^i}{dt} + \sum \partial_{y_i} \circ \dfrac{dB_{t}^{i+n}}{dt}.$ 
\end{example}

\subsection{Example: the damped stochastic oscillator}\label{sec:damped}

The stochastic oscillator is such a ubiquitous model in science, that is has been studied extensively in the literature by many authors, see, e.g. \cite{Gitterman2005,Makus1988}.

As strange as it may sound, a damped oscillator (which loses its energy due to damping) may become conservative again (energy preserving) in a stochastic sense if a white noise is added to it. The reason is that the white noise has the effect of increasing the expected value of the energy, and so it cancels out the energy-losing effect of the damping term.

As an example of the reduction theory, in this subsection, we will describe a simple model of damped stochastic oscillator, and do the reduction of it with respect to a natural $SO(2)$ group action. We start with the harmonic oscillator given by the Hamiltonian function $h = \dfrac{1}{2}(x^2 + y^2) $ on the symplectic plane $(\mathbb{R}^2, \omega = dx \wedge dy)$. The Hamiltonian vector field is the rotation vector field $X_h = x\partial_y - y \partial_x$, which is of course $SO(2)$-invariant. We add a damping term, which is a vector field $D$ on $\mathbb{R}^2$ pointing towards the origin of $\mathbb{R}^2$ (so that $D(h) < 0$). For simplicity, we assume that $D$ is $SO(2)$-invariant, so we put $D = -f(r).(x \partial_x + y \partial_y)$ where $r = \sqrt{x^2+y^2}$ and $f$ is a positive function. Finally, we add a stochastic term to the system. The simplest one to take is the Brownian motion $B = \partial_x \circ \dfrac{dB_{t}^1}{dt} + \partial_y \circ \dfrac{dB_{t}^2}{dt}$. This term is also $SO(2)$-invariant diffusion-wise. Adding the above three terms, we get the system: \begin{equation*}X = X_h + D + B\end{equation*} \begin{equation}= (x \partial_y - y \partial_x) - f(r) (x\partial_x + y\partial_y) + (\partial_x \circ \dfrac{dB_{t}^1}{dt} + \partial_y \circ \dfrac{dB_{t}^2}{dt}).\end{equation}
Due to the damping term, $X$ is not a Hamiltonian SDS. On the other hand, it is $SO(2)$-invariant diffusion-wise, so we can reduce it to get a system on the orbit space $\mathbb{R}_{+} \cong \mathbb{R}^2\slash SO(2)$ with the coordinate $h = \dfrac{x^2 + y^2}{2}.$ Simple calculations show that the reduced system is: \begin{equation} Y = \Bigl (\dfrac{1}{2} - 2hf\Bigl )\partial_h + \sqrt{2h} \partial_h \circ \dfrac{dB_{t}}{dt}.\end{equation}

Using the coordinate $r = \sqrt{2h}$ instead of $h$, we obtain: \begin{equation} Y = \Bigl ( \dfrac{1}{2r} - rf\Bigl )\partial_r + \partial_r \circ \dfrac{dB_{t}}{dt} \end{equation}
Consider the deterministic part (the drift term) $Y_0 = g(r) \partial_r$ of $Y$, where \begin{equation} g(r) = \dfrac{1}{2r} - rf(r).\end{equation}
Assume that there is a value $r_0 > 0$ such that $g(r_0) = 0, g(r) < 0$ if $r > r_0$ and $g(r) > 0$ if $0<r< r_0$. (For example, if $f = c$ is a constant then $r_0 = \sqrt{\dfrac{1}{2c}}$, if $f= \dfrac{c}{r}$ so that the magnitude of the damping vector field $D$ is constant then $r_0 = \dfrac{1}{2c}$). Then $r_0$ is an attractive stationary point of the deterministic part $Y_0$ of $Y$. If the random term $\partial_r \circ \dfrac{dB}{dt}$ of $Y$ would dissapear, then the energy level of the system would tend to the value $\dfrac{r_{0}^2}{2}$ when time goes to infinity, and one would say that $\dfrac{r_{0}^2}{2}$ is the stable energy level of the system. With the random term $\partial_r \circ \dfrac{dB}{dt}$ in $Y$, $\dfrac{r_{0}^2}{2}$ is still a stable energy level of the system, but in a weaker sense: there is a probability density function $p$ on $\mathbb{R}_{+}$ which is ``mostly concentrated near $r_o$'' and which is invariant under the diffusion process of $Y$, similarly to the case of Ornstein-Uhlenbeck process. 

In polar coordinates $(\theta,r)$, i.e. $x = \cos\theta . r$ and $y = \sin\theta . r$, we have $D = rf(r) \partial_r, X_h = \partial_\theta$ and $(\partial_x)^2 + (\partial_y)^2 = \dfrac{1}{r}\partial_r + (\partial_r)^2 + \Bigl (\dfrac{1}{r}\partial_\theta\Bigl )^2,$ so our system $X = X_h + D +B$ is diffusion equivalent to \begin{equation} \hat{X} = \left[ (\dfrac{1}{2r} - rf) \partial_r + \partial_r \circ \dfrac{dB_{t}^1}{dt}\right] + \left[ \partial_\theta + \dfrac{1}{r} \partial_\theta \circ \dfrac{dB_{t}^2}{dt}\right]\end{equation}

The above expression is the decomposition of $\hat{X}$ (i.e of $X$ up to diffusion equivalence) into the sum of its radial part (which is nothing but the reduced system $Y$) and its angular part \begin{equation} \Theta = \partial_\theta + \dfrac{1}{r}\partial_\theta \circ \dfrac{dB_t}{dt}.\end{equation}
Since $\dfrac{1}{r}$ is invariant with respect to $\partial_\theta$, and $\partial_\theta$ commutes with $\dfrac{1}{r}\partial_\theta$, we immediately get the following fact about the movement of the angular coordinate in the model:
\begin{prop}
In the above model, consider $\theta$ as a coordinate function on $\mathbb{R}$ instead of on $\mathbb{T}^1 = \mathbb{R}\slash 2\pi\mathbb{Z}$ (i.e. consider the total angular movement). Then $\theta(t) - t$ is a martingale process. In particular, the mean frequency of the above damped stochastic oscillator model is $1$ almost surely for any initial value.
\end{prop}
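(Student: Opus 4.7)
The plan is to isolate the angular SDE from the decomposition $\hat X = Y+\Theta$ and exploit the independence of $B^1$ (driving the radial part $Y$) from $B^2$ (driving the angular part $\Theta$). Applying $\Theta$ to the lifted coordinate $\theta\colon\mathbb R\to\mathbb R$ produces the Stratonovich equation
\[ d\theta_t = dt + \frac{1}{r_t}\circ dB_t^2.\]
The Stratonovich--It\^o correction is $\tfrac12\,d\langle 1/r_\cdot,B^2\rangle_t$, and it vanishes here because $r_t$ is a measurable functional of $B^1$ only and $B^1\perp B^2$. This is the SDE-theoretic manifestation of the two facts singled out in the paper: $\partial_\theta(1/r)=0$ (so the ``on-diagonal'' It\^o correction vanishes) and $[\partial_\theta,\,\tfrac1r\partial_\theta]=0$ (so no cross-correlation with the radial noise appears). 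Hence the equation is already in It\^o form, and
\[ M_t := \theta(t)-\theta(0)-t = \int_0^t \frac{1}{r_s}\,dB_s^2.\]

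To upgrade $M_t$ from a local to a true martingale on the natural filtration of $(B^1,B^2)$, I would condition on $\mathcal F^r_\infty = \sigma(r_s:s\ge 0)$. Under this conditioning the integrand $1/r_s$ is deterministic and $B^2$ is still a Brownian motion (by independence), so the conditional law of $M_t$ is centred Gaussian with variance $\int_0^t r_s^{-2}\,ds$. Provided this variance is almost surely finite---which follows from $r_0>0$ together with the attractive drift of $Y$ at $r_0$ keeping the radial process away from the origin---the tower property then promotes $M_t$ to a genuine martingale, as claimed.

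For the mean-frequency assertion I would invoke Dambis--Dubins--Schwarz conditionally on $\mathcal F^r_\infty$ to obtain $M_t = W_{Q(t)}$, where $Q(t)=\int_0^t r_s^{-2}\,ds$ and $W$ is a Brownian motion. The radial diffusion $Y$ is positive recurrent on $\mathbb R_+$ with an invariant density $p$ concentrated near $r_0$, so Birkhoff's ergodic theorem gives $Q(t)/t\to \int r^{-2}p(r)\,dr<\infty$ almost surely. Combined with the standard fact $|W_s|/s\to 0$ almost surely, this yields $M_t/t\to 0$ almost surely, whence $\theta(t)/t\to 1$ almost surely.

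The principal analytic obstacle is the finiteness of $\int r^{-2}p(r)\,dr$, the only non-formal step in the argument. It requires a quantitative lower-tail estimate on the radial process near $r=0$, which can be extracted from the explicit speed and scale functions of $Y$ under the hypotheses on $f$ made in this subsection; the strong drift $\tfrac{1}{2r}$ near the origin---precisely the term produced by the reduction---provides exactly the repulsion needed.
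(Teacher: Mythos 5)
Your first step coincides with the paper's entire argument: the paper's ``proof'' is precisely the observation that $\partial_\theta(1/r)=0$ and $[\partial_\theta,\tfrac1r\partial_\theta]=0$, so the Stratonovich--It\^o correction in the angular equation vanishes and $\theta(t)-\theta(0)-t=\int_0^t r_s^{-1}\,dB^2_s$ is a stochastic integral against a Brownian motion independent of the radial motion. Up to that point you and the paper agree. The problems begin with your two ``upgrade'' steps, neither of which works as stated. First, almost sure finiteness of the conditional variance $Q(t)=\int_0^t r_s^{-2}\,ds$ plus the tower property does not promote the local martingale $M_t$ to a true martingale: you need integrability, $\mathbb{E}|M_t|=\sqrt{2/\pi}\,\mathbb{E}\bigl[\sqrt{Q(t)}\bigr]<\infty$, and a.s.\ finiteness of $Q(t)$ (which is true, since $r$ never hits $0$) gives no control on $\mathbb{E}[\sqrt{Q(t)}]$; the deep approaches of $r$ to the origin make $Q(t)$ heavy-tailed, so this integrability is exactly the delicate point and cannot be waved through.

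Second, and more seriously for your mean-frequency argument, the finiteness you single out as ``the principal analytic obstacle,'' namely $\int r^{-2}p(r)\,dr<\infty$, is simply false in this model. In the paper's own example $f(r)=c$ the invariant density is $p(r)=2cre^{-cr^2}$, and
\begin{equation*}
\int_0^\infty r^{-2}\,p(r)\,dr \;=\; 2c\int_0^\infty \frac{e^{-cr^2}}{r}\,dr \;=\;+\infty ,
\end{equation*}
with a logarithmic divergence at $r=0$; the same happens for any admissible $f$, because the drift $\tfrac{1}{2r}$ is exactly the two-dimensional Bessel drift, which forces $p(r)\sim \mathrm{const}\cdot r$ near $0$. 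That drift prevents the radial process from hitting $0$, but it does not provide the repulsion your argument needs: excursions reaching depth $\varepsilon$ occur with probability of order $1/\log(1/\varepsilon)$ per unit time and each contributes of order $(\log(1/\varepsilon))^2$ to $Q$, which is precisely the mechanism behind Spitzer-type heavy-tailed windings. Consequently Birkhoff's theorem gives $Q(t)/t\to\infty$ a.s., not a finite limit, and the chain ``$Q(t)/t\to\int r^{-2}p\,dr$, $|W_s|/s\to 0$, hence $M_t/t\to 0$'' collapses at its first link; the growth of $Q(t)$ is in fact superlinear, being dominated by rare deep excursions whose winding contributions have Cauchy-type tails. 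So this part of your plan is not missing a technical tail estimate that could be supplied later --- the estimate you propose to prove is untrue, and the route through a finite ergodic average of $r^{-2}$ has to be abandoned (note that the paper itself offers no argument for the frequency statement beyond the martingale observation, so there is nothing there to fall back on either).
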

\begin{example}\label{e:c}
Consider the case $f(r) = c$. In this case, we have \begin{equation}Y = \Bigl ( \dfrac{1}{2r}-cr\Bigl ) \partial_r + \partial_r \circ \dfrac{dB_t}{dt}.\end{equation}

By the Fokker-Planck equation, we get that the invariant density is $p(r) = 2cre^{-cr^2}$ on $\mathbb{R}^{+}$ with mean $\sqrt{\dfrac{\pi}{4c}}$, median $\sqrt{\dfrac{\ln 2}{c}}$, and mode $\sqrt{\dfrac{1}{2c}}$, (see Figure \ref{fig:g2}).
\begin{figure}[htb]
\begin{center}
\includegraphics[scale=0.9]{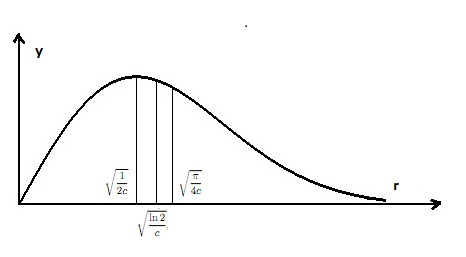}
\end{center}
\caption{Density function $p(r) = 2cre^{-cr^2}$for the case $f(r)=c$}
\label{fig:g2}
\end{figure}
\end{example}
\subsection{The problem of lost variables}\label{section:lost}

Consider an SDS $X$ on a manifold $M$, and a submersion $\Phi: M \to N$, with $\dim M = m > \dim N = k$. Locally, for each point $p \in M$, we can have a coordinate system $(x_1,\hdots,x_k,x_{k+1},\hdots,x_m)$ near $p$ in $M$, such that $x_1,\hdots,x_k$ are (the pull-back via $\Phi$ of) functions on $N$ and form a local coordinate system in a neighborhood of $\Phi(p)$ in $N$, and $x_{k+1},\hdots,x_m$ are additional coordinate functions on $M$.

Assume now that for each point $p \in M$, we have a way to determine $x_1(p),\hdots,x_k(p)$ with precision, but we do not know about $x_{k+1}(p),\hdots, x_m(p)$ and it is somehow impossible to measure them accurately. For example, if $x_1$ is the price of a stock, and $x_{k+1}$ is its momentum (assuming that such a thing exists), then $x_1$ can be observed (say via real-time streaming quotes), while $x_{k+1}$ is something for which there is no direct measurements, only some estimations based on various theories and formulas. This is what we call the problem of lost variables: how can we deal with a (stochastic or deterministic) dynamical system, when some of the variables are ``lost''?

A naive way to deal with this problem of lost variables is to simply forget about them: instead of considering the system as a system on $M$, we consider it as a system on $N$. One might argue that, by doing so, we will get a system on $N$, which is ``more random'' than the system on $M$ (if the system on $M$ is deterministic, the system on $N$ would still be random) due to lack of information. In other words, the submersion map $\Phi: M \to N$ would send our system $X$ on $M$ to a random dynamical system on $N$, i.e. we would get a reduction by simply forgetting (losing) some variables.

Unfortunately, things do not work that way in general. In other words, if $\Phi: M \to N$ is given and if $X$ is an arbitrary SDS on $M$, then in general there does not exist any random dynamical system $Z$ on $N$ such that $\Phi : (M,X) \to (N,Z)$ would be a diffusion morphism, i.e. a morphism between the two corresponding stochastic processes. One cannot do reduction of SDS by simply forgetting some (arbitrary) variables!

As a matter of fact, we have the following straightforward and rather restrictive condition for an SDS $X$ on $M$ to be projectable to a system on $N$ via a given surjective map $\Phi : M \to N$:
\begin{prop}\label{pro:proj}
Let $\Phi : M \to N$ be a smooth surjective map from a smooth manifold $M$ to a smooth manifold $N$, and $\displaystyle X = X_0 + \sum_{i=1}^k X_i \circ \dfrac{dB_{t}^i}{dt}$ be an SDS on $M$.

i) There exists an SDS $\displaystyle Y = Y_0 + \sum_{i} Y_i \circ \dfrac{dB_{t}^i}{dt}$ on $N$ such that $\Phi : (M,X) \to (N,Y)$ is a system morphism if and only if for any points $x,y \in M$ such that $\Phi(x) = \Phi(y)$ we also have \begin{equation}\label{eq:condition1} \Phi_{*}(X_i(x)) = \Phi_{*}(X_i(y)) \quad \forall i = 0,1,\hdots,k.\end{equation}
ii) The diffusion process of $X$ is projectable to a Markov process on $N$ if and only if for any function $f : N \to \mathbb{R}$ and any two points $x,y \in M$ such that $\Phi(x) = \Phi(y)$ we also have \begin{equation}\label{eq:condition2} A_X(\Phi^*(f)(x)) = A_X(\Phi^*(f)(y))\end{equation} where $A_X$ is the diffusion generator of $X$. If this condition is satisfied and $\Phi$ is a submersion then the projected diffusion process on $N$ is a diffusion process generated by an SDS on $N$.

iii) In the case when $X= X_0$ is a smooth deterministic system then the deterministic process generated by $X$ on $M$ is projectable to a Markov process on $N$ if and only if for any points $x,y \in M$ such that $\Phi(x) = \Phi(y)$ we also have \begin{equation} \label{eq:condition3} \Phi_{*}(X(x)) = \Phi_{*}(X(y)). \end{equation} If this condition is satisfied then $X$ is projected to a smooth vector field on $N$.
\end{prop}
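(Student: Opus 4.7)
The three parts share a common template: in each case the pointwise condition is exactly the obstruction to well-definedness of the corresponding downstairs object, the forward implication is an immediate unwrapping of the definition of (system or diffusion) morphism, and the backward implication constructs the downstairs object by fiberwise definition. I would dispatch (i) and (iii) first, since (ii) contains the only substantive step.

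For (i), if $\Phi$ is a system morphism then $Y_i \circ \Phi = \Phi_* X_i$, and evaluating at any two points with the same image gives \eqref{eq:condition1}. Conversely, \eqref{eq:condition1} says that $\Phi_*(X_i(x))$ depends only on $\Phi(x)$, so one may set $Y_i(\Phi(x)) := \Phi_*(X_i(x))$; smoothness of $Y_i$ is a local statement that is checked by choosing a smooth local section of $\Phi$ through any regular value and defining $Y_i$ by composition, the pointwise formula forcing consistency. Part (iii) is (i) specialized to the single vector field $X = X_0$, so the same argument yields both the forward implication and the construction of the projected smooth vector field.

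For the forward direction of (ii), if the diffusion generated by $X$ projects to a Markov process on $N$ with generator $A_Y$, then the standard intertwining identity $A_X(\Phi^* f) = \Phi^*(A_Y f)$ forces $A_X(\Phi^* f)$ to be fiber-constant, i.e.\ \eqref{eq:condition2}. Conversely, \eqref{eq:condition2} allows the definition $A_Y(f)(\Phi(x)) := A_X(\Phi^* f)(x)$, and from the second-order Leibniz identity
\begin{equation*}
A_X(\Phi^*(fg)) = \Phi^* f \cdot A_X(\Phi^* g) + \Phi^* g \cdot A_X(\Phi^* f) + \sum_{i=1}^k X_i(\Phi^* f)\, X_i(\Phi^* g)
\end{equation*}
one reads off two things: the third term is also fiber-constant (the first two are automatically so), and $A_Y$ satisfies the analogous Leibniz rule on $N$, hence is a genuine second-order differential operator whose principal symbol at $\Phi(x)$ equals $\Phi_*\bigl(\sum_i X_i(x) \otimes X_i(x)\bigr)$, a symmetric positive semidefinite tensor.

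It remains to realize $A_Y$ as the generator of an actual SDS on $N$, assuming $\Phi$ is a submersion. Working in coordinates adapted to a local section of $\Phi$, the projected symbol is a smooth positive semidefinite section of $S^2 TN$, and can locally be written as $\frac{1}{2}\sum_j Y_j \otimes Y_j$ by smooth local vector fields; setting $Y_0 := A_Y - \tfrac12 \sum_j Y_j^2$ gives a first-order remainder that is necessarily a vector field. Globalization follows by the same partition-of-unity construction used in the proof of Theorem \ref{thm:proj}, where each patch contributes its own family of Brownian terms weighted by $\sqrt{f_k}$. The only real subtlety is the smooth square-root step, since the rank of the symbol can drop along strata of $N$; but because the proposition requires only the existence of some SDS (and does not constrain the number of Brownian components), this is absorbed by oversampling the decomposition across the partition of unity.
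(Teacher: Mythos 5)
Your route is the intended one: the paper in fact states this proposition without proof (calling it straightforward), so the natural benchmark is its proof of Theorem \ref{thm:proj}, and your argument is patterned on exactly that. Part (i), the forward direction of (ii) via the generator-level intertwining (equivalently the derivative of $\mathbb{E}^x[f(\Phi(x_t))]$ at $t=0$), and the carr\'e-du-champ identity showing that $A_Y$ is a well-defined second-order operator whose symbol is the fiber-constant pushforward of $\sum_i X_i\otimes X_i$, are all correct.

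One step, as written, is not an argument: the ``smooth square-root'' of the projected symbol. A smooth positive semidefinite section of $S^2TN$ need not admit any smooth finite decomposition $\frac{1}{2}\sum_j Y_j\otimes Y_j$, and ``oversampling across the partition of unity'' does not produce one; the partition of unity only patches decompositions you already have. The correct move --- and the one used inside the proof of Theorem \ref{thm:proj} --- is to take no abstract square root at all: over a local section $\sigma$ of the submersion $\Phi$ set $Y_j(y):=\Phi_{*}\bigl(X_j(\sigma(y))\bigr)$; these are smooth, and the fiber-constancy of $\sum_i X_i(\Phi^*f)\,X_i(\Phi^*g)$ (your Leibniz identity) guarantees that $\frac{1}{2}\sum_j Y_j\otimes Y_j$ equals the projected symbol on the whole patch, not just along $\sigma$, after which $Y_0:=A_Y-\frac{1}{2}\sum_j Y_j^2$ is a vector field and the $\sqrt{f_k}$-weighted patching finishes the construction. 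Two smaller gaps: in (iii) the hypothesis is only that the projected process is Markov, not that a system morphism exists, so the forward implication needs the same derivative-of-expectation step as in (ii) applied to $A_X=X_0$ (a Markov image of a deterministic process is deterministic), rather than being literally ``(i) specialized''; and in the converse of (ii) you should say why generator-level intertwining yields Markovianity of $\Phi(x_t)$ (it solves the martingale problem for $A_Y$, which is well posed once $A_Y$ comes from smooth fields, $\Phi$ being a submersion). Note finally that the smoothness of the projected $Y_i$ in (i) and (iii) genuinely uses local sections, i.e.\ the submersion standing assumption of this subsection; for a merely smooth surjective $\Phi$ (e.g.\ $x\mapsto x^3$ on $\mathbb{R}$ with $X=\partial_x$) condition \eqref{eq:condition1} holds but the pushforward is only continuous, so your local-section justification is where that hypothesis silently enters.
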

\begin{example}
Put $M = \mathbb{T}^2$ with two periodic coordinates $\theta_1$(mod 1), $\theta_2$(mod 1), $N = \mathbb{T}^1$ with the periodic coordinate $\theta_2$ (mod 1), $\Phi : M \to N$ given by the formula $\Phi(\theta_1,\theta_2) = \theta_2$. The vector field $X = \sin(2\pi\theta_1)\partial_{\theta_2}$ on $M$ does not satisfy the condition of Assertion iii) of above proposition, so the deterministic process $X$ on $M$ cannot be projected to any Markov process on $N$.
\end{example}

\section{Integrable stochastic dynamical systems}
\subsection{Integrable dynamical systems and Liouville torus actions}
In this subsection, we will recall the definition of integrability of deterministic dynamical system, and show the fundamental structure-preserving property of their corresponding Liouville torus actions. In particular, Theorem \ref{thm:actiondiff} of this subsection about the invariance of linear differential operators under Liouville torus actions will be needed in the study of integrable SDS. 
\begin{defn}
A vector field $X$ on a manifold $M$ is said to be \textbf{integrable of type (p,q)}, where $p \geq 1,  q \geq 0, p+q = \dim M$, if there exist $p$ vector fields $X_1, X_2,\hdots,X_p$ and $q$ functions $F_1,\hdots,F_q$ on $M$ which satisfy the following conditions:

i) The vector fields $X_1,\hdots,X_p$ commute pairwise and commute with $X$: \begin{equation}[X_i, X_j] = 0 \text{ and } [X, X_i] = 0 \quad \forall i, j.\end{equation}

ii) The functions $F_1,\hdots, F_q$ are common first integrals of $X, X_1,\hdots, X_p$: \begin{equation}X(F_j) = X_i(F_j) = 0,\quad  \forall i, j.\end{equation}

iii) $X_1 \wedge X_2 \wedge \hdots \wedge X_p \ne 0$ and $dF_1 \wedge \hdots \wedge dF_q \ne 0$ almost everywhere.

Under the above conditions, we will also say that $(X_1,\hdots,X_p, F_1,\hdots,F_q)$ is an integrable system of type $(p,q)$.
\end{defn}

A level set  \begin{equation} N = \{ F_1 = c_1,\hdots, F_q = c_q\}\end{equation} of an integrable system $(X_1,\hdots,X_p,F_1,\hdots,F_q)$ is called a \textbf{regular level set} if $X_1 \wedge \hdots \wedge X_p(x) \ne 0$ and $dF_1 \wedge \hdots \wedge dF_q (x) \ne 0$ $\forall$ $x \in N$. We have the following theorem, which goes back to Liouville \cite{Liu1855}. (Liouville proved it for the Hamiltonian systems, but the proof in the non-Hamiltonian case is essentially the same):

\begin{thm}[Liouville]\label{thm:Liou}
Let $N$ be a connected compact regular level set of an integrable system $(X_1,\hdots, X_p, F_1,\hdots, F_q)$. Then $N$ is diffeomorphic to a $p$-dimensional torus $\mathbb{T}^p$, and a neighborhood  $\mathcal{U}(N)$ of $N$ can be written as $\mathbb{T}^p \times B^q$ (where $B^q$ is  a $q$-dimensional ball) with a coordinate system $(\theta_1,\hdots, \theta_p, r_1,\hdots, r_q)$ where $\theta_1,\hdots, \theta_p$ are periodic coordinates on $\mathbb{T}^p,$ such that: \begin{equation}X_i = \sum a_{ij}(r_1,\hdots,r_q) \dfrac{\partial}{\partial \theta_j}\end{equation} and \begin{equation}F_i = f_i(r_1,\hdots, r_q)\end{equation} do not depend on $\theta_1,\hdots, \theta_p.$
In particular, the transitive $\mathbb{T}^p$-torus action $$\mathbb{T}^p \times (\mathbb{T}^p \times B^q) \to \mathbb{T}^p \times B^q$$ \begin{equation}((\rho_1,\hdots, \rho_p), (\theta_1,\hdots,\theta_p, r_1,\hdots,r_q)) \mapsto (\rho_1+\theta_1,\hdots, \rho_p+\theta_p, r_1,\hdots, r_q)\end{equation} preserves the system.
\end{thm}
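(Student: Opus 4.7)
The plan is to split the proof into a topological stage producing the diffeomorphism $N \cong \mathbb{T}^p$, followed by a semi-local stage building the coordinate system $(\theta_1,\ldots,\theta_p,r_1,\ldots,r_q)$ on a tubular neighborhood of $N$.

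For the topological stage, the regularity assumption $dF_1 \wedge \cdots \wedge dF_q \neq 0$ on $N$ makes $N$ into a smooth compact $p$-dimensional submanifold of $M$, while $X_j(F_i)=0$ forces every $X_j$ to be tangent to $N$. Since the $X_j$'s commute pairwise, their joint flow defines a smooth action $\Phi : \mathbb{R}^p \times N \to N$, $\Phi_{(t_1,\ldots,t_p)}(x) := \phi^{X_1}_{t_1} \circ \cdots \circ \phi^{X_p}_{t_p}(x)$. The condition $X_1 \wedge \cdots \wedge X_p \neq 0$ on $N$ together with a dimension count shows that the infinitesimal generators span $T_xN$ everywhere, so the orbit map is a submersion; hence every orbit is open, and connectedness of $N$ forces the action to be transitive. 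The isotropy subgroup $\Lambda \subset \mathbb{R}^p$ at any chosen basepoint is then a closed discrete subgroup (by local freeness) with compact quotient $\mathbb{R}^p / \Lambda \cong N$, so $\Lambda$ is a lattice of full rank $p$, and $N \cong \mathbb{T}^p$.

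For the semi-local stage, regularity is an open condition, so on a neighborhood of $N$ all nearby common level sets $N_r$ are regular and carry the analogous transitive $\mathbb{R}^p$-action with its own period lattice $\Lambda_r$. Fix $x_0 \in N$ and a smooth local transversal $\Sigma$ to $N$ through $x_0$. Because $dF_1 \wedge \cdots \wedge dF_q \neq 0$, the functions $r_i := F_i - c_i$ serve as coordinates on $\Sigma \cong B^q$; write $x_0(r) \in \Sigma$ for the point with coordinates $r$. Pick a basis $e_1(0),\ldots,e_p(0)$ of $\Lambda_0$. Applying the implicit function theorem to the period equation $\Phi_{e(r)}(x_0(r)) = x_0(r)$ — with non-degeneracy coming from the fact that the infinitesimal generators span the tangent to each $N_r$ — extends this to a smooth basis $e_1(r),\ldots,e_p(r)$ of $\Lambda_r$ for $r$ near $0$. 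Define on $\mathcal{U}(N)$ the commuting vector fields $Y_j := \sum_{k=1}^p e_j^k(r)\, X_k$; by construction each orbit of $Y_j$ is closed with period $1$, so the $Y_j$'s generate a free $\mathbb{T}^p$-action on $\mathcal{U}(N)$ preserving $(F_1,\ldots,F_q)$. Taking the resulting angle functions $(\theta_1,\ldots,\theta_p)$ together with $(r_1,\ldots,r_q)$ produces the required diffeomorphism $\mathcal{U}(N) \cong \mathbb{T}^p \times B^q$.

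Finally, in these coordinates $F_i = r_i + c_i$ depends only on $r$; and each $X_i$ preserves the $r_j$'s (again because $X_i(F_j)=0$) and commutes with every $Y_j$, hence with every $\partial_{\theta_k}$, so $X_i$ is $\mathbb{T}^p$-invariant and necessarily of the form $X_i = \sum_j a_{ij}(r)\, \partial_{\theta_j}$. The transitive $\mathbb{T}^p$-action preserving the whole integrable system is then exactly the one generated by $Y_1,\ldots,Y_p$. The hardest step is securing smooth dependence of the period lattice $\Lambda_r$ on the parameter $r$; once that has been obtained by the implicit function theorem as above, everything else follows from regularity and the commutation relations.
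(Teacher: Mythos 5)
Your proof is correct and is essentially the classical argument that the paper itself invokes without repeating: the paper simply cites Liouville and remarks that the non-Hamiltonian proof is ``essentially the same,'' and your two-stage argument (commuting complete flows on $N$ give a transitive $\mathbb{R}^p$-action with a full-rank period lattice, hence $N\cong\mathbb{T}^p$; then the implicit function theorem gives smoothly varying periods, the rescaled fields $Y_j=\sum_k e_j^k(r)X_k$ generate the $\mathbb{T}^p$-action, and $X_i(F_j)=0$ plus $[X_i,Y_j]=0$ force the normal form $X_i=\sum_j a_{ij}(r)\partial_{\theta_j}$, $F_i=f_i(r)$) is exactly that standard proof. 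The only point you gloss — that the continued periods $e_1(r),\hdots,e_p(r)$ form a basis of the whole lattice $\Lambda_r$ rather than a finite-index sublattice — is a routine continuity/compactness check and does not affect correctness.
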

The $\mathbb{T}^p$ action in the above theorem is unique up to automorphisms of $\mathbb{T}^p$ and is called the \textbf{Liouville torus action} of the system near the \textbf{Liouville torus} $N$.

A very important fact about the Liouville torus action is that it preserves every tensor field which is preserved by the system. More precisely, we have:
\begin{thm}[\cite{Zung2012}] \label{thm:action}
 Under the assumptions of Theorem \ref{thm:Liou}, let $\mathcal{G}$ be an arbitrary tensor field on $M$, $\mathcal{G} \in \Gamma(\bigotimes^k TM \bigotimes^h T^{*}M)$, which satisfies at least one of the following two conditions:

i) $\mathcal{G}$ is invariant with respect to the vector field $X_1,\hdots, X_p$: \begin{equation}\mathcal{L}_{X_i} \mathcal{G} = 0 \quad \forall i = 1,\hdots,p.\end{equation}

ii) $\mathcal{G}$ is invariant with respect to the vector field $X_1$, and moreover, the orbit of $X_1$ is dense in almost every orbit of the Liouville $\mathbb{T}^p$-action near $N$. 

Then $\mathcal{G}$ is also invariant with respect to the Liouville $\mathbb{T}^p$-action in a neighborhood of $N$.
\end{thm}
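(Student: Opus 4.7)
The plan is to use the action-angle-like coordinates $(\theta_1,\ldots,\theta_p,r_1,\ldots,r_q)$ on $\mathcal{U}(N)\cong\mathbb{T}^p\times B^q$ supplied by Theorem \ref{thm:Liou}, in which $X_i=\sum_j a_{ij}(r)\,\partial_{\theta_j}$ with $A(r)=(a_{ij}(r))$ invertible. I would expand $\mathcal{G}$ in the $\mathbb{T}^p$-invariant frame $\{e_\alpha\}$ of tensor products of $\partial_{\theta_j}$, $\partial_{r_\beta}$, $d\theta_j$, $dr_\beta$ as $\mathcal{G}=\sum_\alpha g_\alpha(\theta,r)\,e_\alpha$, and then Fourier-expand each $g_\alpha(\theta,r)=\sum_{m\in\mathbb{Z}^p}\hat g_{\alpha,m}(r)\,e^{2\pi i\langle m,\theta\rangle}$. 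Since the frame is Liouville-invariant, the target conclusion $\mathcal{L}_{\partial_{\theta_k}}\mathcal{G}=0$ is equivalent to $\hat g_{\alpha,m}\equiv 0$ for all $m\neq 0$ and all $\alpha$.

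Direct computation gives $\mathcal{L}_{X_i}(\partial_{\theta_j})=0=\mathcal{L}_{X_i}(dr_\beta)$, $\mathcal{L}_{X_i}(\partial_{r_\beta})=-\sum_j(\partial_{r_\beta}a_{ij})\,\partial_{\theta_j}$, and $\mathcal{L}_{X_i}(d\theta_j)=\sum_\beta(\partial_{r_\beta}a_{ij})\,dr_\beta$. Defining the type index $N(\alpha)$ to be the number of $\partial_{r_\beta}$ and $d\theta_j$ factors in $e_\alpha$, the Leibniz rule produces $\mathcal{L}_{X_i}(e_\alpha)=\sum_{N(\gamma)=N(\alpha)-1}C^\gamma_{i\alpha}(r)\,e_\gamma$ with $r$-dependent coefficients and strictly decreasing $N$. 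Expanding $\mathcal{L}_{X_i}\mathcal{G}=0$ in the frame and in Fourier modes yields, for every $i$, $m$ and $\gamma$,
\begin{equation}\label{eq:planlie}
2\pi i\,\langle m,\vec a_i(r)\rangle\,\hat g_{\gamma,m}(r)+\sum_{N(\alpha)>N(\gamma)}C^\gamma_{i\alpha}(r)\,\hat g_{\alpha,m}(r)=0,
\end{equation}
where $\vec a_i(r)=(a_{i1}(r),\ldots,a_{ip}(r))$.

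The conclusion follows by downward induction on $N(\gamma)$. At the maximum $N$-value, the sum in \eqref{eq:planlie} is empty, so $\langle m,\vec a_i(r)\rangle\,\hat g_{\gamma,m}(r)=0$ for every $i$. Under hypothesis (i), invertibility of $A(r)$ ensures that for every $m\neq 0$ some $i$ satisfies $\langle m,\vec a_i(r)\rangle\neq 0$, forcing $\hat g_{\gamma,m}(r)=0$ pointwise. Under hypothesis (ii), using $X_1$ alone, density of the $X_1$-orbit in almost every Liouville torus means $\langle m,\vec a_1(r)\rangle\neq 0$ for all $m\neq 0$ on a dense subset of $B^q$; smoothness of $\hat g_{\gamma,m}$ in $r$ then gives $\hat g_{\gamma,m}\equiv 0$ on all of $B^q$. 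Once these vanish for all $\alpha$ with $N(\alpha)>N(\gamma)$, the sum in \eqref{eq:planlie} disappears and the same scalar argument completes the inductive step.

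What makes the plan work is the triangular structure of \eqref{eq:planlie}: $\mathcal{L}_{X_i}$ strictly lowers the type index $N$ because it kills the $\mathbb{T}^p$-tangent factors $\partial_{\theta_j}$ and the $r$-cotangent factors $dr_\beta$, and so no diagonal term ever enters the coupling. The only delicate point specific to hypothesis (ii) is passing from the generic-$r$ vanishing, where the density assumption supplies nonvanishing of $\langle m,\vec a_1(r)\rangle$, to vanishing on all of $B^q$; this is handled by smoothness (or real-analyticity) of the Fourier coefficients in $r$ together with the fact that the set of "irrational" tori has full measure, hence is dense.
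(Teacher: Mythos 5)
Your proposal is correct. Note that the paper does not actually prove Theorem \ref{thm:action} itself (it is quoted from \cite{Zung2012}); what it does prove is the differential-operator analogue, Theorem \ref{thm:dense}, by a strategy of the same family: work in the Liouville coordinates $(\theta,r)$, filter the object by a degree, and do induction, using density of the $Z$-orbits on the tori together with a ``zero mean value on each torus'' argument to kill the non-invariant part of each coefficient at each inductive step. Your argument is parallel in outline but differs in implementation: you grade the tensor by the number of $\partial_{r_\beta}$ and $d\theta_j$ factors (rather than by operator order), observe that $\mathcal{L}_{X_i}$ strictly lowers this grading with $r$-dependent coefficients, and then replace the density-plus-mean-value step by explicit Fourier analysis in $\theta$, so that at each stage the equation for a mode $m\neq 0$ reduces to $\langle m,\vec a_i(r)\rangle\,\hat g_{\gamma,m}(r)=0$; in Fourier language the two mechanisms are essentially equivalent, but your version makes the triangular structure and the role of non-resonance completely explicit. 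You also treat hypothesis (i) directly via invertibility of $A(r)$, whereas the paper handles (i) by reducing it to (ii) through a generic constant combination $\sum a_iX_i$; both are fine. Two small points you should make explicit: the invertibility of $A(r)$ on all of $B^q$ is obtained by shrinking the neighborhood (legitimate, since the conclusion is only claimed in some neighborhood of $N$), and in the step for hypothesis (ii) plain continuity of $\hat g_{\gamma,m}$ in $r$ already suffices to pass from the dense (full-measure) set of non-resonant $r$ to all of $B^q$ — no real-analyticity is available or needed.
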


For example, in the case of an integrable Hamiltonian system, the symplectic form $\omega$ is a covariant tensor which is preserved by the system, so the above theorem says that $\omega$ is also preserved by the Liouville $\mathbb{T}^p$-torus action. One recovers easily  the existence of action-angle variables (the so called Arnold-Liouville-Mineur theorem) from this fact.

Since every SDS gives rise to a second order differential operator (the diffusion operator), a natural step in generalizing Theorem \ref{thm:action} to the case of integrable SDS is to replace the invariant tensor field $\mathcal{G}$ in the theorem by an invariant differential operator. By doing so, we get the following theorem: 

\begin{thm}\label{thm:actiondiff}
Under the assumptions of Theorem \ref{thm:Liou}, let $\Lambda$ be a linear differential operator on $M$ which satisfies at least one of the following two conditions :

i) $\Lambda$ is invariant with respect to $X_1,\hdots, X_p.$

ii) $\Lambda$ in invariant with respect to $X_1$, and moreover, the orbit of $X_1$ is dense in a dense family of orbits of the Liouville $\mathbb{T}^p$-action near $N.$

Then $\Lambda$ is invariant with respect to the Liouville $\mathbb{T}^p$-action in a neighborhood of $N.$
\end{thm}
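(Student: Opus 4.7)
The plan is to work in the action-angle coordinates $(\theta_1,\ldots,\theta_p,r_1,\ldots,r_q)$ on $\mathcal{U}(N)\cong\mathbb{T}^p\times B^q$ provided by Theorem~\ref{thm:Liou}, in which $X_i=\sum_j a_{ij}(r)\partial_{\theta_j}$ and the Liouville torus action translates the $\theta$-variables. After shrinking $\mathcal{U}(N)$ the matrix $A(r)=(a_{ij}(r))$ is invertible throughout, since $X_1\wedge\cdots\wedge X_p\neq 0$ on $N$. Writing
\[ \Lambda=\sum_{\alpha,\beta} c_{\alpha\beta}(\theta,r)\,\partial_\theta^\alpha\partial_r^\beta \]
in these coordinates, the desired $\mathbb{T}^p$-invariance of $\Lambda$ is precisely the condition $\partial_{\theta_k}c_{\alpha\beta}=0$ for every $k,\alpha,\beta$. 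A direct Leibniz-type computation, in which the $\gamma=0$ contributions cancel, yields
\[ [X_i,\Lambda]=\sum_{\alpha,\beta} X_i(c_{\alpha\beta})\,\partial_\theta^\alpha\partial_r^\beta-\sum_{j,\alpha,\beta,\gamma\neq 0} c_{\alpha\beta}\binom{\beta}{\gamma}(\partial_r^\gamma a_{ij})\,\partial_\theta^{\alpha+e_j}\partial_r^{\beta-\gamma}, \]
where $e_j$ is the $j$-th unit multi-index. Setting the coefficient of $\partial_\theta^\mu\partial_r^\nu$ to zero gives
\[ X_i(c_{\mu\nu})=\sum_{j:\,\mu_j\geq 1,\,\gamma\neq 0} c_{\mu-e_j,\nu+\gamma}\binom{\nu+\gamma}{\gamma}(\partial_r^\gamma a_{ij}), \]
whose crucial feature is that the right-hand side involves only $c_{\mu',\nu'}$ with $|\mu'|=|\mu|-1$, of strictly smaller $\theta$-order.

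For case (i), I would induct on $|\mu|$. The base case $|\mu|=0$ is immediate: the right-hand side is an empty sum, so $X_i(c_{0,\nu})=0$ for every $i$, and invertibility of $A(r)$ then forces $\partial_{\theta_k}c_{0,\nu}=0$. For the inductive step with $|\mu|\geq 1$, the induction hypothesis makes the right-hand side depend only on $r$; inverting $A(r)$ yields $\partial_{\theta_k}(c_{\mu\nu})(\theta,r)=G_k(r)$, independent of $\theta$. Consequently $c_{\mu\nu}(\theta,r)$ is affine in $\theta$, and here is the decisive point: on the torus $\mathbb{T}^p$ periodicity of $c_{\mu\nu}(\cdot,r)$ forces such an affine function to be constant in $\theta$, so $G_k(r)\equiv 0$ and $c_{\mu\nu}$ is $\theta$-independent, closing the induction.

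For case (ii), the flow of $X_1$ on a Liouville torus $\{r=r_0\}$ translates $\theta$ along the fixed direction $(a_{11}(r_0),\ldots,a_{1p}(r_0))\in\mathbb{R}^p$, and thus is a one-parameter subgroup of the Liouville $\mathbb{T}^p$-action on $\{r=r_0\}$. By the density hypothesis and Kronecker's theorem, for $r_0$ in a dense subset of $B^q$ this orbit is dense in $\mathbb{T}^p$. Then $X_1$-invariance of $\Lambda$ gives $\rho^*\Lambda=\Lambda$ for every $\rho$ in this dense subgroup, and continuity of the pullback $\rho\mapsto\rho^*\Lambda$ (which reduces pointwise to continuity on the smooth coefficients of $\Lambda$) extends the equality to every $\rho$ in the closure, that is, the full Liouville $\mathbb{T}^p$-action on $\{r=r_0\}$. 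Smoothness of $\Lambda$ then propagates $\mathbb{T}^p$-invariance from this dense family of tori to all of $\mathcal{U}(N)$.

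The main technical hurdle I anticipate is the bookkeeping in the commutator formula above, where careful tracking of how commuting $X_i$ past $\partial_r^\beta$ produces lower-order terms involving derivatives of the $a_{ij}$'s is what generates the clean inductive relation. The genuinely decisive step, however, is the torus-periodicity argument, which rules out spurious $\theta$-linear dependences at each inductive stage and is really what makes the induction close.
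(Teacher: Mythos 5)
Your commutator formula is correct, and your case (i) argument (ascending induction on the $\theta$-order $|\mu|$, using invertibility of $A(r)$ near $N$ and periodicity in $\theta$) is sound; it is in fact a more direct route to part (i) than the paper's, which deduces (i) from (ii) by choosing a generic constant combination $\sum c_iX_i$ with dense orbits.

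The genuine gap is in case (ii), which is the essential case (the paper isolates it as Theorem \ref{thm:dense}). Your step ``$X_1$-invariance of $\Lambda$ gives $\rho^*\Lambda=\Lambda$ for every $\rho$ in this dense subgroup'' conflates the time-$t$ flow $\phi_t(\theta,r)=(\theta+t\,a_1(r),r)$, where $a_1(r)=(a_{11}(r),\hdots,a_{1p}(r))$, with the torus translation $T_{t a_1(r_0)}$. These two maps agree only on the single torus $\{r=r_0\}$, while $\Lambda$ carries $\partial_r$-derivatives and does not restrict to that torus: the pullback $\phi_t^*\Lambda$ at points of $\{r=r_0\}$ involves $D\phi_t$, whose transverse block contains the terms $t\,\partial a_{1j}/\partial r_k$, and these grow without bound as $t\to\infty$, so $\phi_t^*\Lambda=\Lambda$ does not pass to the limit to give $T_s^*\Lambda=\Lambda$, not even coefficientwise along one torus. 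Equivalently, your own recursion with $i=1$ yields $X_1(c_{\mu\nu})=\sum_{j,\gamma\neq 0}c_{\mu-e_j,\nu+\gamma}\binom{\nu+\gamma}{\gamma}\partial_r^\gamma a_{1j}$, not $X_1(c_{\mu\nu})=0$, so the coefficients are not a priori constant along the dense $X_1$-orbits --- which is exactly what your density-plus-continuity step presupposes. The missing ingredient is the averaging argument of the paper: induct on $|\mu|$ as in your case (i); by the inductive hypothesis the right-hand side is a function of $r$ alone, while $X_1(c_{\mu\nu})=\sum_j a_{1j}(r)\partial_{\theta_j}c_{\mu\nu}$ has zero mean over each torus, so that function vanishes and $X_1(c_{\mu\nu})=0$; only then do density of the $X_1$-orbits on a dense family of tori and continuity of the coefficients give $\theta$-independence (invertibility of $A(r)$ is of course unavailable here, since only $X_1$ is assumed). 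With this repair your scheme agrees in substance with the paper's proof of Theorem \ref{thm:dense}, which organizes the same two ingredients (zero mean on tori, density of orbits) as a descending induction on the order of $\Lambda$.
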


Of course, Theorem \ref{thm:action} and Theorem \ref{thm:actiondiff} look very similar to each other, except for the fact that Theorem \ref{thm:action} deals with tensor fields while Theorem \ref{thm:actiondiff} deals with linear differential operators. Recall that differential operators of order one are given by vector fields. But higher-order differential operators are not given by tensor fields, so Theorem \ref{thm:actiondiff} is not a consequence of Theorem \ref{thm:action}. To prove Theorem \ref{thm:actiondiff}, we will prove the following:

\begin{thm}\label{thm:dense}
Let $Z = \sum_{i=1}^p a_i(r_1,\hdots, r_q)\partial_{\theta_i}$ be a vector field on $\mathcal{U} = \mathbb{T}^p \times B^q$ with coordinates $(\theta_1$ (mod 1),$\hdots$, $\theta_p$ (mod 1), $r_1,\hdots,r_q)$ which is tangent to the tori $\mathbb{T}^p \times \{\mbox{pt}\}$ and invariant with respect to the natural free action of $\mathbb{T}^p$ on $\mathcal{U}=\mathbb{T}^p \times \mathbb{B}^q$. Assume that the functions $a_1,\hdots, a_p$ are incommensurable, i.e there does not exist any nontrivial $p$-tuple of integers $(k_1,\hdots, k_p)$ such that $\sum k_i a_i(r) = 0$ in an open subset of $\mathcal{U}$. Let $\Lambda$ be a linear differential operator on $\mathcal{U}$ which is invariant with respect to $Z$, i.e \begin{equation}\mathcal{L}_Z \Lambda = Z \circ \Lambda - \Lambda \circ Z = 0.\end{equation} Then $\Lambda$ is invariant with respect to the action of $\mathbb{T}^p$ on $\mathcal{U}.$
\end{thm}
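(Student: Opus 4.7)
The plan is to Fourier-decompose $\Lambda$ in the torus variables $\theta_1,\ldots,\theta_p$ and show that all non-constant Fourier modes must vanish. Since $\mathbb{T}^p$-invariance of $\Lambda$ is equivalent to $[\partial_{\theta_j},\Lambda]=0$ for all $j$, and the coefficients in a normal-form expansion $\Lambda=\sum_{\alpha,\beta}c_{\alpha,\beta}(\theta,r)\,\partial_\theta^\alpha\partial_r^\beta$ are smooth and periodic in $\theta$, one writes $\Lambda=\sum_{k\in\mathbb{Z}^p}\Lambda^{(k)}$, where $\Lambda^{(k)}$ is the weight-$k$ component satisfying $[\partial_{\theta_j},\Lambda^{(k)}]=2\pi i k_j\Lambda^{(k)}$. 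Equivalently $\Lambda^{(k)}=m_{e^{2\pi i k\cdot\theta}}\circ M^{(k)}$, where $m_f$ denotes multiplication by $f$ and $M^{(k)}$ is a differential operator whose coefficients depend only on $r$. The target becomes $M^{(k)}=0$ for every $k\ne 0$.

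I would next check that $[Z,\cdot]$ respects the weight grading. Since $Z=\sum a_i(r)\partial_{\theta_i}$ has $r$-only coefficients, $[\partial_{\theta_j},Z]=0$ for each $j$, and the Jacobi identity yields $[\partial_{\theta_j},[Z,\Lambda^{(k)}]]=2\pi i k_j\,[Z,\Lambda^{(k)}]$, so $[Z,\Lambda^{(k)}]$ is again weight-$k$. Thus $[Z,\Lambda]=0$ decouples into $[Z,\Lambda^{(k)}]=0$ for every $k$. Using the identity $[Z,m_{e^{2\pi i k\cdot\theta}}]=2\pi i(k\cdot a(r))\,m_{e^{2\pi i k\cdot\theta}}$, the per-mode condition rewrites as the operator equation
\begin{equation*}
[Z,M^{(k)}]=-2\pi i\,(k\cdot a(r))\,M^{(k)}.
\end{equation*}

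The crux is a key lemma: for any operator $M=\sum h_\gamma(r)\partial_\theta^{\alpha_\gamma}\partial_r^{\beta_\gamma}$ with $r$-only coefficients, $[Z,M]$ has strictly smaller $\partial_r$-order than $M$, where the $\partial_r$-order is $\max_{h_\gamma\ne 0}|\beta_\gamma|$. This follows from $[Z,\partial_{\theta_j}]=0$ and $[Z,\partial_{r_j}]=-\sum_i(\partial_{r_j}a_i)\partial_{\theta_i}$ (of $\partial_r$-order $0$); iterated Leibniz then shows that each term of $[Z,\partial_r^\beta]$ in normal form carries at least one $\partial_\theta$ and at most $|\beta|-1$ copies of $\partial_r$. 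Applying the lemma, compare top $\partial_r$-order parts of the per-mode equation: the left side vanishes at this order while the right equals $-2\pi i(k\cdot a(r))$ times the top part of $M^{(k)}$. Incommensurability of the $a_i$'s guarantees that $k\cdot a(r)$ is not identically zero on any open subset of $B^q$ for $k\ne 0$, so by continuity every coefficient of the top part of $M^{(k)}$ vanishes. Peeling off successive $\partial_r$-orders forces $M^{(k)}=0$, and hence $\Lambda=\Lambda^{(0)}$ is $\mathbb{T}^p$-invariant. The main obstacle is the bookkeeping for the key lemma — verifying that iterated commutation with $Z$ only trades $\partial_r$'s for $\partial_\theta$'s and never raises the $\partial_r$-count — but this is a direct Leibniz expansion once the right notion of $\partial_r$-order is in place.
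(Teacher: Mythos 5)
Your argument is correct, and it is genuinely different from the one in the paper. The paper works directly with the coefficients $C_{I,J}(\theta,r)$ of $\Lambda$ in normal form and runs a double induction: first modulo operators of lower total order, then downward in the $\partial_r$-degree $|J|$ within each order, showing at each stage that $Z(C_{I,J})=0$ and then invoking the density of the orbits of $Z$ on a dense family of tori (a consequence of incommensurability) together with a torus-averaging step (the mean of $Z(C_{I,J})$ over each torus vanishes) to conclude that each $C_{I,J}$ is $\theta$-independent; this mirrors Zung's proof of the tensor-field version. You instead decompose $\Lambda$ into Fourier weight components $\Lambda^{(k)}=m_{e^{2\pi i k\cdot\theta}}\circ M^{(k)}$, use that $[Z,\cdot]$ preserves the grading to decouple the hypothesis into the per-mode eigen-equations $[Z,M^{(k)}]=-2\pi i\,(k\cdot a)\,M^{(k)}$, and kill each $k\neq 0$ mode by your lemma that bracketing an $r$-coefficient operator with $Z$ strictly lowers the $\partial_r$-order, comparing top $\partial_r$-order coefficients and using that $k\cdot a$ vanishes on no open set. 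Your route uses the incommensurability hypothesis purely algebraically (empty interior of the resonance sets), bypassing both the orbit-density argument and the averaging step, and needs only a single filtration (by $\partial_r$-order) rather than the paper's two-level induction; the paper's route, in exchange, avoids any Fourier decomposition of the operator (though for a finite-order operator with smooth coefficients this is harmless, since it amounts to Fourier expansion of finitely many coefficient functions and the per-mode equations follow from uniqueness of Fourier coefficients) and applies verbatim the scheme already used for invariant tensor fields. One small presentational point: when you ``peel off successive $\partial_r$-orders,'' it is cleanest to phrase it as a minimality or contradiction argument — if $M^{(k)}\neq 0$, its exact top $\partial_r$-order part is shown to vanish, a contradiction — but this is exactly what your argument does.
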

\begin{proof}
The proof follows the same steps as Zung's proof in \cite{Zung2012} of Theorem \ref{thm:action}. We write $\Lambda$ in the coordinates $(\theta,r)$ as follows: \begin{equation}\Lambda = \sum_{I,J} C_{I,J}(\theta,r) \partial_{\theta}^I \partial_{r}^J,\end{equation} where $I = (i_1,\hdots, i_p)$ and $J = (j_1,\hdots,j_q)$ are multi-indexes and \begin{equation}\partial_{\theta}^I=(\partial_{\theta_1})^{i_1} (\partial_{\theta_2})^{i_2} \hdots (\partial_{\theta_p})^{i_p}.\end{equation} The principal symbol of $\Lambda$ is the function \begin{equation}\sum_{|I| + |J| = m} C_{I,J} \hat{\theta}^{I} \hat{r}^J,\end{equation} where $|I| = \sum_{k=1}^p i_k$ , $ m$ is the order of $\Lambda$, $\hat{\theta}^I = \hat{\theta}^{i_1}_1 ... \hat{\theta}^{i_p}_p$ and $\hat{\theta}_i : T^* M \to \mathbb{R}$ are linear functions on $T^* M$ given by $\partial_{\theta_i}.$

Let us first prove that the principal symbol of $\Lambda$ is invariant with respect to the $\mathbb{T}^p$-action.

Denote by $\mathcal{O}_m$ the space of linear differential operators of order at most $m$ on $\mathcal{U} = \mathbb{T}^p \times B^q.$ Then the principal symbol of $\Lambda$ may be identified with an element of the quotient $\mathcal{O}_m \slash{\mathcal{O}_{m-1}}$. We will make some calculations modulo $\mathcal{O}_{m-1}:$
$$\mathcal{L}_Z \Lambda \equiv \left(\sum_i a_i \partial_{\theta_i}\right) \left(\sum_{|I|+|J| = m}C_{I,J} \partial_{\theta}^I \partial_{r}^J\right) - \left(\sum_{|I|+|J| = m}C_{I,J} \partial_{\theta}^I \partial_{r}^J\right)  \left(\sum_i a_i \partial_{\theta_i}\right)$$
$$\equiv \sum_{i,I,J}a_i \dfrac{\partial C_{I,J}}{\partial \theta_i} \partial_{\theta}^I \partial_{r}^J - \sum_{|I|+|J| = m} \sum_{i=1}^p \sum_{k=1}^q C_{I,J} \dfrac{\partial a_i}{\partial r_k}\partial_{\theta}^{I + 1_i} \partial_{r}^{J-1_k}\quad \mbox{ mod }\mathcal{O}_{m-1},$$ where $1_i = (0,\hdots,1,\hdots,0)$ with $1$ at the $i$-th place.

Consider a multi-index $(I,J)$ such that $|J| = h$ is the highest possible. Then the coefficient of $\partial_{\theta}^I \partial_{r}^J $ in the above expression of $\mathcal{L}_X \Lambda$ is \begin{equation}\sum_{i} a_i \dfrac{C_{I,J}}{\partial \theta_i} = Z(C_{I,J}).\end{equation}
Since $\mathcal{L}_X \Lambda = 0$, all of its coefficients must vanish, and in particular $Z(C_{I,J}) = 0$, i.e. $C_{I,J}$ is invariant with respect to $Z$. In other words, it is invariant on the orbits of $Z$. But the incommensurability condition on $Z$ implies that its orbits are dense on a dense family of tori $\mathbb{T}^p \times \{\mbox{pt}\}$, i.e. it is invariant under the $\mathbb{T}^p$- action.

Consider now a multi-index $(I,J)$ such that $| J | = h-1$ is lower than the highest possible number $h$ by $1$. Then the coefficient of $\partial_{\theta}^I \partial_{r}^J$ in the above expression of $\mathcal{L}_X \Lambda$ is: 
\begin{equation}\sum_{i} a_i \dfrac{\partial C_{I,J}}{\partial \theta_i} - \sum_{j,k} C_{I - 1_j, J + 1_k} \dfrac{\partial a_j}{\partial r_k},\end{equation} which must be zero, so we get the following equality : 
\begin{equation}Z(C_{I,J}) = \sum_{j,k} C_{I - 1_j, J + 1_k} \dfrac{\partial a_j}{\partial r}.\end{equation} Notice that the right hand side of this equation is $\mathbb{T}^p$- invariant, because the functions $a_j$ are $\mathbb{T}^p$ invariant and the functions $C_{I - 1_j, J + 1_k}$ are also $\mathbb{T}^p$- invariant by the previous argument. Thus $Z(C_{I,J})$ is $\mathbb{T}^p$- invariant. But the mean value of $Z(C_{I,J})$ on each torus $\mathbb{T}^p$ is $0$, so in fact we have $Z(C_{I,J}) = 0$, which, as we have seen, implies that $C_{I,J}$ is $\mathbb{T}^p$- invariant.

Similarly,  by induction on $h - |J|$, we get that $C_{I,J}$ is $\mathbb{T}^p$-invariant for any multi-index $(I,J)$ such that $|I| + |J| = m$ is the order of $\Lambda$.

Denote by $\mathcal{O}_{\mathbb{T}^p}$ the space of $\mathbb{T}^p$-invariant operators and consider $\Lambda$ modulo $\mathcal{O}_{\mathbb{T}^p}$. Then since all the coefficients of order $m$ of $\Lambda$ are $\mathbb{T}^p$- invariant, we have : 
\begin{equation}\Lambda \equiv  \sum_{|I| + |J| \leq m-1} C_{I,J} \partial_{\theta}^I \partial_{r}^J \quad \mbox{ mod } \mathcal{O}_{\mathbb{T}^p}.\end{equation}

Repeat the above process for the terms of order $m-1$ of $\Lambda$, we get that $Z(C_{I,J})$ is $\mathbb{T}^p$- invariant for any such multi-index $(I,J)$. By reverse induction on $|I| + |J|$, using the same arguments as above, we get that $C_{I,J}$ is $\mathbb{T}^p$-invariant for any multi-index $(I,J)$. Thus $\Lambda$ is $\mathbb{T}^p$-invariant.
\end{proof}
\textbf{Proof of Theorem \ref{thm:actiondiff}} : If the orbit of the vector field $X_1$ is dense in a dense family of orbits of the Liouville $\mathbb{T}^p$-action, then $X_1$ satisfies the condition of $Z$ in Theorem \ref{thm:dense}. Thus, Part (ii) of Theorem \ref{thm:actiondiff} follows immediately from Theorem \ref{thm:dense}. 

The proof of Part (i) is similar. Part (i) also follows easily from Part (ii), because we can choose constant coefficients $a_i$ such that the orbit of the vector field $Y = \sum a_i X_i$ is dense in a dense family of orbits of the Liouville $\mathbb{T}^p$-action.{\hfill$\square$}

\subsection{What is an integrable SDS?}
Up to diffusion equivalence, an SDS $\displaystyle X = X_0 + \sum X_i \circ \dfrac{dB_{t}^i}{dt}$ is characterized by its diffusion operator $A_X = X_0 + \dfrac{1}{2}\sum_{i=1}^k X_{i}^2$, which is a second order linear differential operator. So an SDS may be viewed as something lying between classical dynamical systems (first order differential operators) and quantum systems which are often given by (pseudo-)differential operators. For quantum systems, the notion of integrability usually means the existence of a full family of commuting operators. Taking hints from both classical and quantum mechanics, we arrive at the following notion of integrability for SDS's:
\begin{defn}\label{defn:pqr}
\textbf{An integrable SDS of type (p,q,r)} on a manifold $M$, where $p \geq 1, q \geq 0, r \geq 0, p+q+r = \dim M$, consists of a family of:

\quad$\bullet$ p diffusion generators: $\Lambda_1,\hdots,\Lambda_p$ (which can be written as $\Lambda_i = X_i^0 + \dfrac{1}{2}\sum_{k=1}^{h_i} (X_{k}^i)^2,$ $ i= 1,\hdots, p, \mbox{ where } X_{0}^i,\hdots, X_{h_i}^i \mbox{ are vector fields),}$

\quad$\bullet$ q vector fields: $Z_1,\hdots, Z_q,$

\quad$\bullet$ r functions: $F_1,\hdots, F_r$\\
on $M$,  which commute pairwise when considered as linear differential operators on $M$ (the $\Lambda_i$ are second-order, the $Z_j$ are first order, and the $F_k$ are zeroth order operators respectively), and which satisfy the following independence condition : the principal symbols of these operators form a family of $p + q +r = \dim M$  functionally independent functions on $T^* M$.
\end{defn}

\begin{defn}\label{defn:pqr2}
An SDS $X$ on manifold $M$ is called \textbf{integrable} of type $(p,q,r)$ if there is an integrable system $(\Lambda_1,\hdots,\Lambda_p,Z_1,\hdots,Z_q,F_1,\hdots,F_r)$ of type $(p,q,r)$ for some $p,q,r$ $(p+q+r = \dim M)$ such that the diffusion generator $A_X$ of $X$ commutes with all the linear differential operators $\Lambda_1,\hdots,\Lambda_p,Z_1,\hdots,Z_q,F_1,\hdots,F_r$. We will also say that $X$ is integrable with the aid of an integrable SDS $(\Lambda_1,\hdots,\Lambda_p,Z_1,\hdots,Z_q,$ $F_1,\hdots,F_r)$.
\end{defn}
Of course, in the above definition, one can often put $\Lambda_1 = A_X$. But it may also happen that $p=0$. If, in the above definition, the $\Lambda_i$ are diffusion operators linear differential operators of any order instead of diffusion operators, then we will say that we have a SDS which is \textbf{integrable in quantum sense}.

Recall that, if $Z$ is a vector field, considered as a differential operator of order $1$ on $M$, then the principal symbol of $Z$ is $Z$ itself, but considered as a fiberwise linear function $\hat{Z} : T^* M \to \mathbb{R}$. If $F$ is a function on $M$, then it can also be considered as a zeroth-order operator on $M$ (multiplication with $F$), whose symbol is nothing but the pull-back $\hat{F}$ of $F$ from $M$ to $T^* M$. If $\Lambda_i = X_0 + \dfrac{1}{2} \sum X_{k}^2$ is a diffusion generator, then its principal symbol is $\hat{\Lambda} =  \dfrac{1}{2} \sum \hat{X}_{k}^2$ where each $\hat{X}_k  : T^* M \to \mathbb{R}$ is a fiberwise-linear function on $T^* M$ given by $X_k$. The independency condition in the above definition means that the functions $\hat{\Lambda}_1,\hdots, \hat{\Lambda}_p, \hat{Z}_1,\hdots, \hat{Z}_q, \hat{F}_1,\hdots, \hat{F}_r $ are functionally independent on $T^* M.$

Let us recall the following classical fact from the theory of linear differential operators, see, e.g.,\cite{LiDiffOpe}: if two linear differential operators $\Lambda$ and $\Pi$ on a manifold $M$ commute then their principal symbols $\hat{\Lambda}$ and $\hat{\Pi}$ Poisson-commute on $T^*M$ with respect to the canonical symplectic structure. As an immediate consequence of this fact and Definition \ref{defn:pqr} we have the following relation between integrable SDS's and integrable Hamiltonian systems:
\begin{prop}\label{thm:(pqr)}
Let $(\Lambda_1,\hdots, \Lambda_p, Z_1,\hdots, Z_q, F_1,\hdots, F_r)$ be an integrable SDS of type $(p,q,r)$ on a manifold $M$. Then $(\hat{\Lambda}_1,\hdots, \hat{\Lambda}_p, \hat{Z}_1,\hdots, \hat{Z_q}, \hat{F}_1,\hdots, \hat{F}_r)$ is an integrable Hamiltonian system on $T^* M.$
\end{prop}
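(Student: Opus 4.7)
The proof is essentially an immediate consequence of the definitions combined with the classical fact about principal symbols of commuting operators that the authors have just recalled, so my plan is short and the main task is only to verify the bookkeeping.

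First, I would invoke the classical result cited from the theory of linear differential operators: whenever two linear differential operators on $M$ commute, their principal symbols Poisson-commute on $T^*M$ with respect to the canonical symplectic form. By Definition \ref{defn:pqr}, the operators $\Lambda_1,\hdots,\Lambda_p,Z_1,\hdots,Z_q,F_1,\hdots,F_r$ commute pairwise, so applying the above fact to every pair immediately gives
\begin{equation*}
\{\hat{\Lambda}_i,\hat{\Lambda}_j\}=\{\hat{\Lambda}_i,\hat{Z}_j\}=\{\hat{\Lambda}_i,\hat{F}_k\}=\{\hat{Z}_i,\hat{Z}_j\}=\{\hat{Z}_i,\hat{F}_k\}=\{\hat{F}_i,\hat{F}_k\}=0
\end{equation*}
on $T^*M$. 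Thus the family of principal symbols is pairwise in involution.

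Next I would check the count and the independence requirement for Liouville integrability on $T^*M$. The cotangent bundle $T^*M$ has dimension $2\dim M$, so Liouville integrability requires exactly $\dim M$ functionally independent functions in involution. The family $(\hat{\Lambda}_1,\hdots,\hat{\Lambda}_p,\hat{Z}_1,\hdots,\hat{Z}_q,\hat{F}_1,\hdots,\hat{F}_r)$ contains $p+q+r=\dim M$ functions, and the functional independence of these symbols on $T^*M$ is precisely the independence condition built into Definition \ref{defn:pqr}. Combining this with the involutivity established in the previous step yields that this family is an integrable Hamiltonian system on $T^*M$ in the usual sense.

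There is essentially no obstacle here; the only thing to be slightly careful about is to note that the symbols are of mixed degree (quadratic for the $\hat{\Lambda}_i$, linear for the $\hat{Z}_j$, and fiberwise-constant for the $\hat{F}_k$), so functional independence must be understood as functional independence of these polynomial-in-the-fiber functions on the total space $T^*M$, not as independence of their restrictions to a single fiber — but this is exactly the sense adopted in the independence clause of the definition, so nothing further needs to be done. The proposition then follows by assembling these two observations.
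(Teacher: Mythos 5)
Your argument is correct and is exactly the route the paper takes: the paper presents this proposition as an immediate consequence of the recalled fact that commuting linear differential operators have Poisson-commuting principal symbols, combined with the independence clause of Definition \ref{defn:pqr}, which is precisely your two-step verification. Nothing is missing; your remark about the symbols being of mixed fiberwise degree is a reasonable clarification consistent with the definition.
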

\begin{remark}
In the special case when $p = 0$ then an integrable SDS of type $(0,q,r)$ is in fact an integrable deterministic dynamical system of type $(q,r)$, and Proposition \ref{thm:(pqr)} is of course still valid in this case.
\end{remark}
\begin{example}
The damped stochastic oscillator in Subsection \ref{sec:damped} is an integrable SDS of type $(1,1,0).$ Integrable stochastic dynamical systems considered by Xue-Mei Li \cite{XuLi2008} in the theory of averaging of stochastic perturbations are integrable SDS's of type $(0,n,n)$ on a symplectic $2n$-dimensional manifold.
\end{example}

\begin{prop}\label{pro:pqr}
If a non-trivial SDS $X = X_0 + \sum_{i=1}^k X_i \circ \dfrac{dB_{t}^i}{dt}$ is integrable of type $(p,q,r)$ for some $p,q,r \geq 0$ then it is also integrable of type $(p+q+r,0,0).$
\end{prop}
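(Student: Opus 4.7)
The plan is to construct explicitly $p+q+r$ pairwise commuting diffusion generators $\widetilde\Lambda_1, \ldots, \widetilde\Lambda_{p+q+r}$ whose principal symbols are functionally independent on $T^*M$ and each of which commutes with $A_X$; by Definition~\ref{defn:pqr2} this realizes $X$ as integrable of type $(p+q+r, 0, 0)$.

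First I would extract the key algebraic identity behind the whole argument: each $F_k$ is a strong first integral of every constituent vector field of every $\Lambda_i$ (and of $A_X$). Indeed, the operator commutation $[\Lambda_i, F_k] = 0$ unpacks to $\Lambda_i(F_k) + \sum_l X_l^i(F_k)\, X_l^i = 0$; its first-order part gives the vector-field identity $\sum_l X_l^i(F_k)\, X_l^i = 0$, and contracting this with $dF_k$ produces the sum-of-squares relation $\sum_l (X_l^i(F_k))^2 = 0$, forcing $X_l^i(F_k) = 0$ for every $l$.

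In the main case $p \geq 1$, I would set
\[
\widetilde\Lambda_i := \Lambda_i \ (1 \leq i \leq p), \quad \widetilde\Lambda_{p+j} := \tfrac{1}{2} Z_j^2 \ (1 \leq j \leq q), \quad \widetilde\Lambda_{p+q+k} := e^{F_k} \Lambda_1 \ (1 \leq k \leq r).
\]
The middle batch are diffusion generators by inspection. For the last batch, the strong-first-integral identity combined with $(gX)^2 = g^2 X^2$ (valid whenever $X(g) = 0$) yields
\[
e^{F_k} \Lambda_1 \;=\; e^{F_k} X_0^1 + \tfrac{1}{2} \sum_l (e^{F_k/2} X_l^1)^2,
\]
again a bona fide diffusion generator. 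All pairwise commutations, and the commutations with $A_X$, then reduce through the Leibniz rule $[UV, W] = U[V, W] + [U, W]V$ to the given commutations of the original data together with the vanishing of $X_l^i(F_k)$ and its consequences; for instance $[e^{F_k}\Lambda_1, e^{F_l}\Lambda_1] = 0$ follows because $\Lambda_1$ annihilates $e^{F_l}$.

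Finally, to check functional independence of the principal symbols, I would restrict to the open dense subset $\{\hat\Lambda_1 \neq 0,\ \hat Z_j \neq 0 \text{ for all } j\} \subset T^*M$ and observe that the differentials $d\hat\Lambda_i$, $\hat Z_j\, d\hat Z_j$, $e^{F_k}(\hat\Lambda_1\, dF_k + d\hat\Lambda_1)$ reduce, by subtracting $e^{F_k}\,d\hat\Lambda_1$ from the $k$-th new differential and dividing the middle ones by the nonzero scalars $\hat Z_j$, to the original independent family $\{d\hat\Lambda_i,\, d\hat Z_j,\, dF_k\}$. The residual case $p = 0$ is handled identically after swapping the seed $\Lambda_1$ for a suitable second-order operator available from the non-trivial SDS $X$ itself ($A_X$ when genuinely second-order, or $\tfrac{1}{2} Z_1^2$ when $q \geq 1$, or $\tfrac{1}{2} X_0^2$ in the remaining deterministic sub-case). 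The main delicate point is verifying the pairwise commutations among the operators $e^{F_k}\Lambda_1$, which rests crucially on the somewhat non-obvious strong-first-integral conclusion $X_l^i(F_k) = 0$ extracted above.
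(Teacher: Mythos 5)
Your construction is essentially the paper's: keep $\Lambda_1,\hdots,\Lambda_p$, square the vector fields $Z_j$, and promote each $F_k$ to (a function of $F_k$) times a second-order "seed". The paper takes $F_k^2\Lambda_1$ where you take $e^{F_k}\Lambda_1$, and for $p=0$, $q\geq 1$ it takes $(F_kZ_1)^2$ where you take $e^{F_k}\cdot\tfrac12 Z_1^2$; these are cosmetic variants, and your write-up is actually more detailed than the paper's on the two points it leaves implicit: the extraction of the strong-first-integral identity $X^i_l(F_k)=0$ from $[\Lambda_i,F_k]=0$ (your contraction-with-$dF_k$ argument is correct), and the rank computation for the principal symbols via column operations, which works precisely because the seed's differential $d\hat\Lambda_1$ (resp. $\hat Z_1 d\hat Z_1$) already lies in the span of the new family.

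Two slips in your residual case $p=0$, both repairable. First, seeding with $A_X$ "when genuinely second-order" is not safe: $\hat A_X$ is not part of the original independent family, $d\hat A_X$ need not lie in the span of the new symbols' differentials, and functional independence can genuinely fail. For instance on $M=\mathbb{T}^1\times\mathbb{R}$ with $Z_1=\partial_\theta$, $F_1=x$ and $A_X$ having second-order part $\tfrac12 e^{-x}\partial_\theta^2$, the symbol of $e^{F_1}A_X$ coincides with $\tfrac12\hat Z_1^2$, so the proposed family has rank $1<2$. Second, the "remaining deterministic sub-case" $p=q=0$ cannot occur for a non-trivial $X$: with $r=\dim M$ independent strong first integrals commuting with $A_X$, all constituent fields $X_i$ ($i\geq1$) and then the drift $X_0$ are forced to vanish almost everywhere, hence identically — this is exactly why the paper excludes that case rather than treating it — and your proposed seed $\tfrac12 X_0^2$ would be the zero operator there anyway. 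Since $\tfrac12 Z_1^2$ serves as seed whenever $q\geq1$, the correct statement of your tail case is simply: if $p=0$ then necessarily $q\geq1$ (by non-triviality), and one uses $\tfrac12 Z_1^2$; with that amendment the proof is complete and matches the paper's.
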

\begin{proof}
Let $(\Lambda_1,\hdots,\Lambda_p,Z_1,\hdots,Z_q,F_1,\hdots,F_r)$ be an integrable SDS of type $(p,q,r)$ whose components commute with $X$. We want to construct a SDS $$(\Lambda_1,\hdots,\Lambda_p,\Lambda_{p+1},\hdots,\Lambda_{p+q},\Lambda_{p+q+1},\hdots,\Lambda_{p+q+r})$$ of type $(p+q+r,0,0)$ whose components commute with $X$. It can be done, for example, as follows: Put $\Lambda_{p+i} = Z_{i}^2 \quad(i=1,\hdots,q)$ and $\Lambda_{p+q+i} = F_i^2 \Lambda_1\quad (i=1,\hdots,r)$ if $p\geq 1$. (It is easy to see that if $\Lambda$ is a diffusion operator and $F$ is a real function then $F^2\Lambda$ is again a diffusion operator). If $p=0$ but $q\geq 1$ then we can put $\Lambda_{p+q+i} = (F_iZ_1)^2$ for example. The case $p=q=0$ is excluded because in that case $X$ would be trivial. The verification of functional independence of $\hat{\Lambda}_1,\hdots,\hat{\Lambda}_{p+q+r}$ is straightforward.
\end{proof}
\subsection{Existence of Liouville torus actions for  integrable SDS's}
\begin{defn}
Let $(\Lambda_1,\hdots,\Lambda_p,Z_1,\hdots,Z_q,F_1,\hdots,F_r)$ be an integrable SDS on a manifold $M$.

i) A point $x \in M$ will be called a \textbf{semi-regular point} of the system if $dF_1(x) \wedge \hdots \wedge dF_r(x) \ne 0$, and $\mbox{span} \hat{A}_1(x),\hdots,\mbox{span} \hat{A}_p(x), Z_1(x),\hdots,Z_q(x)$ together span the kernel space \begin{equation} \bigcap_{i=1}^p\textnormal{Ker } dF_i(x) = \{\alpha \in T_xM | \langle \alpha,dF_1(x)\rangle = \hdots = \langle \alpha,dF_r(x)\rangle = 0 \}.\end{equation} Here span $\hat{\Lambda}_i(x) \stackrel{def}{=} \textnormal{Vect } (Y_1(x),\hdots,Y_k(x))$ if $\hat{\Lambda} = \dfrac{1}{2}\sum_{i=1}^k \hat{Y}_{i}^2$ where $Y_i$ are vector fields and $\hat{Y}_i: T^*M \to \mathbb{R}$ are their corresponding symbols.

ii) If $x$ is a semi-regular point such that $Z_1(x)\wedge\hdots\wedge Z_q(x) \ne 0$ then $x$ is called a \textbf{regular point}.

iii) A connected level set $N = \{F_1 = c_1,\hdots,F_r = c_r\}$ is called a \textbf{regular level set} of the system if every point $x \in N$ is semi-regular, and almost every point of $N$ is regular.
\end{defn}
\begin{thm}\label{thm:316}
Let $(\Lambda_1,\hdots,\Lambda_p,Z_1,\hdots,Z_q,F_1,\hdots,F_r)$ be an integrable SDS on a manifold $M$ such that the map $(F_1,\hdots,F_r) : M \to  \mathbb{R}^r$ is proper. Then for any connected regular level set $N = \{F_1=c_1,\hdots,F_r=c_r\}$ of the system, there is a torus $\mathbb{T}^l$-action $\rho : \mathbb{T}^l \times \mathcal{U}(N) \to \mathcal{U}(N)$ in a neighborhood $\mathcal{U}(N)$ of $N$, where $l \geq q$, which preserves the system, and such that the orbits of this $\mathbb{T}^l$-action on $N$ are saturated by the orbits of the $\mathbb{R}^q$-action generated by $Z_1,\hdots Z_q$ on $N$. (They do not necessarily coincide outside of $N$).
\end{thm}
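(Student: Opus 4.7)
The strategy mirrors that of the classical Liouville theorem \ref{thm:Liou}, the new ingredient being that the diffusion operators $\Lambda_i$ need not give rise to vector fields, but will nevertheless be preserved by the torus action via Theorem \ref{thm:actiondiff}.

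First I would observe that $[Z_j, F_k] = 0$ as operators yields $Z_j(F_k) = 0$, so every $Z_j$ is tangent to every level set of $F = (F_1, \ldots, F_r)$. The semi-regularity assumption makes the $dF_k$ linearly independent along $N$, so $N$ is a smooth $(p+q)$-dimensional submanifold of $M$; properness of $F$ together with connectedness force $N$ to be compact. At the regular points of $N$ the vector fields $Z_1, \ldots, Z_q$ are linearly independent and pairwise commute (since $[Z_i, Z_j] = 0$ as operators is the same as $[Z_i, Z_j] = 0$ as vector fields).

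Next I would construct the torus action on $N$. Compactness of $N$ ensures the $Z_j$ are complete, and their commuting flows define a smooth $\mathbb{R}^q$-action on $N$. I take the closure $G$ of the image of this action inside the diffeomorphism group $\mathrm{Diff}(N)$ endowed with the $C^\infty$-topology: by a classical compactness argument, $G$ is a compact connected abelian Lie group, i.e. a torus $\mathbb{T}^l$ with $l \geq q$, acting smoothly on $N$; its orbits contain the $\mathbb{R}^q$-orbits by construction, and the $\mathbb{R}^q$-orbits are dense in them. Extension to a neighborhood $\mathcal{U}(N) \subset M$ comes from observing that nearby level sets of $F$ are also compact, connected and regular (an open condition), and that the torus structure varies smoothly with the transverse parameter; this yields local coordinates $(\theta_1, \ldots, \theta_l, r_1, \ldots, r_{\dim M - l})$ on $\mathcal{U}(N) \cong \mathbb{T}^l \times B^{\dim M - l}$ in which the $\mathbb{T}^l$-action is translation on the first factor.

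Finally, the torus action preserves the system. Each $F_k$ is preserved tautologically. I would choose real coefficients $c_1, \ldots, c_q$ generically so that the single vector field $Z = \sum_j c_j Z_j$ has orbits dense in a dense family of $\mathbb{T}^l$-orbits (possible because $\mathbb{R}^q$ is already dense in $\mathbb{T}^l$). For each diffusion operator we compute $[Z, \Lambda_i] = \sum_j c_j [Z_j, \Lambda_i] = 0$ as differential operators, so Theorem \ref{thm:actiondiff} (whose key technical content is Theorem \ref{thm:dense}) applies and $\Lambda_i$ is $\mathbb{T}^l$-invariant. The same argument applied to each first-order operator $Z_j$, using $[Z, Z_j] = 0$, shows each $Z_j$ is $\mathbb{T}^l$-invariant. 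Therefore the $\mathbb{T}^l$-action preserves the whole system $(\Lambda_1, \ldots, \Lambda_p, Z_1, \ldots, Z_q, F_1, \ldots, F_r)$.

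The main obstacle is the construction of the torus action in the third paragraph. Contrary to the classical setup of Theorem \ref{thm:Liou}, the $q$ commuting vector fields do not span the tangent space of $N$ (which has dimension $p+q$), so the $\mathbb{R}^q$-orbits are proper $q$-dimensional submanifolds whose closures are subtori whose dimension $l$ could a priori depend on the base point. Verifying that $l$ is locally constant on an open dense part of $N$ and of the regular part of $\mathcal{U}(N)$, and that the resulting torus actions on different level sets of $F$ assemble into a single smooth action on the neighborhood $\mathcal{U}(N)$, is the technical heart of the argument; once this is established, Theorem \ref{thm:actiondiff} delivers the structure-preserving conclusion essentially for free.
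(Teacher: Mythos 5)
Your overall strategy (close up the $\mathbb{R}^q$-flow on $N$ into a torus, propagate to nearby level sets, then invoke Theorem \ref{thm:actiondiff}) is the right shape, but the two places you wave at are exactly where the proof lives, and the first one as stated would fail. You claim that the closure $G$ of the $\mathbb{R}^q$-action inside $\mathrm{Diff}(N)$ is compact ``by a classical compactness argument.'' There is no such argument: $\mathrm{Diff}(N)$ is not compact, and the closure of a finitely generated abelian flow group in it need not be a Lie group at all. In the classical Liouville Theorem \ref{thm:Liou} this step works only because the commuting fields span $T N$, so the orbit map identifies $N$ itself with a torus; here the $Z_j$ span only a $q$-dimensional distribution in the $(p+q)$-dimensional $N$, so that mechanism is unavailable. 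The paper's proof supplies the missing compactness precisely through the diffusion operators, which your construction never uses: by semi-regularity the operator $\Delta_N=\sum_{i=1}^p\Lambda_i+\sum_{j=1}^q Z_j^2$ restricted to $N$ is elliptic, hence of the form $V_N+\Delta_{g_0}$ for a unique Riemannian metric $g_0$ on $N$; the $Z_j$ commute with $\Delta_N$, hence preserve its principal symbol, i.e.\ $g_0$, so their flows lie in the compact Lie group $\mathrm{Iso}(g_0)$, and the closure of $\exp(\sum t_jZ_j|_N)$ there is a torus $\mathbb{T}^l$, $l\geq q$. Without producing an invariant metric (or some equivalent source of compactness) your torus on $N$ does not exist.

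The second gap is the extension from $N$ to $\mathcal{U}(N)$, which you yourself flag as ``the technical heart'' but then do not carry out, and you additionally assert more than is true: you claim smooth coordinates on $\mathcal{U}(N)\cong\mathbb{T}^l\times B^{\dim M-l}$ in which the action is translation, whereas the paper is only able to prove that the action is smooth on each level set and continuous transversally (smoothness on $\mathcal{U}(N)$ is left open in the remark following the proof). The difficulty is that on a nearby level set $N_\alpha$ the closure torus $T_\alpha\subset\mathrm{Iso}(g_\alpha)$ may have dimension $l(\alpha)>l$, and one must select inside each $T_\alpha$ an $l$-dimensional subtorus varying continuously with $\alpha$. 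The paper does this with an auxiliary result (Theorem \ref{thm:isometry}) comparing the isometry groups $\mathrm{Iso}(g_\alpha)$ with $\mathrm{Iso}(g_0)$ for a family of metrics, combined with the Grove--Karcher--Ruh theorem approximating near-homomorphisms of compact groups by genuine homomorphisms, plus the rigidity of homomorphisms between tori to get uniqueness and hence continuity of the family $\hat T_\alpha$. Some argument of this kind is needed; ``the torus structure varies smoothly with the transverse parameter'' is an assertion, not a proof. Your final step (choosing a generic $Z=\sum c_jZ_j$ with dense orbits and applying Theorems \ref{thm:dense}/\ref{thm:actiondiff} to conclude invariance of the $\Lambda_i$ and $Z_j$) is fine once the torus action has actually been constructed.
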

\begin{remark}
In analogy with the case of deterministic integrable systems, we will call the torus $\mathbb{T}^l$-action in $\mathcal{U}(N)$ provided by the above theorem the \textbf{Liouville torus action}.
\end{remark}

\begin{proof}
Due to the regularity and properness condition, $N$ is a compact submanifold of $M$ of dimension $p+q$, and moreover we have that the second-order operator
\begin{equation}
\Delta_N = \sum_{i=1}^p \Lambda_i + \sum_{j=1}^q Z_{j}^2
\end{equation}
on $N$ (i.e. restricted to the functions on $N$) is an elliptic operator on $N$. Hence there is a unique Riemannian metric $g_0$ on $N$ and a vector field $V_N$ on $N$ such that \begin{equation} \Delta_N = V_N + \Delta_{g_0},\end{equation} 
where $\Delta_{g_0}$ denotes the Laplace-Beltrami operator of $g_0$. Since $Z_1,\hdots,Z_q$ preserve $\Delta_N$, they must also preserve the principal symbol, i.e. they preserve the Riemannian metric $g_0$. Since $N$ is compact, the group $\mathcal{O}(g_0)$ of isometries of $g_0$ is a compact Lie group. The Abelian subgroup $\displaystyle \exp \Bigl ( \sum_{i=1}^q t_i Z_i|_{N}\quad| t_i \in \mathbb{R}\Bigl )$ of diffeomorphisms of $N$ is a connected Abelian subgroup of $\mathcal{O}(g_N)$ of dimension $q$ (because the vector fields $Z_1,\hdots,Z_q$ are independent almost everywhere on $N$), hence its closure 
\begin{equation}T_0 := \overline{\exp \Bigl ( \sum_{i=1}^q t_i Z_i|_{N}\quad| t_i \in \mathbb{R}\Bigl )}
\end{equation} 
is a torus of dimension $l \geq q$.

Denote by $\mathcal{U}(N)$ a sufficiently small tubular neighborhood of $N$ in $M$ which is saturated by connected level sets of $(F_1,\hdots,F_r)$ and such that every point in $\mathcal{U}(N)$ is semi-regular. Due to the properness of the map $(F_1,\hdots,F_r): M \to \mathbb{R}^r$ and the regularity of $N$, 
we can identify $\mathcal{U}(N)$ with $N \times B^r$, 
where $B^r \subset \mathbb{R}^n$ is a small neighborhood of the origin $0$ in $\mathbb{R}^r$, 
and such that the level sets of $(F_1,\hdots,F_r)$ in $\mathcal{U}(N) = N \times B^r$ are $N \times \{\alpha\}, \alpha \in B^r$, and $N$ itself is identified with $N \times \{0\}$.

Repeating the above arguments for every level set $N_\alpha = N \times \{\alpha\} \subset \mathcal{U}(N) = N \times B^r$, we get a family of tori
\begin{equation}
T_\alpha = \overline{\{\exp(\sum t_i Z_i)|_{N_\alpha}\quad|\quad t_i \in \mathbb{R}\}} \subset Iso(g_\alpha),
\end{equation}
where 
\begin{equation}
Iso(g_\alpha) = \{\varphi \in \textnormal{Diffeo}(N_\alpha), \varphi_* g_{\alpha} = g_\alpha\}
\end{equation}
is the isometry group of the Riemannian metric $g_\alpha$ induced by the elliptic operator $\sum_{i=1}^p A_i + \sum_{i=1}^q Z_{j}^2|_{N_\alpha}$ on $N_\alpha.$

According to the auxilliary Theorem \ref{thm:isometry} below on the isometry groups of a family of Riemannian metrics, for each $\alpha \in B^r$ near $0$ there is an injective homomorphism
\begin{equation}
\rho_\alpha : Iso(g_\alpha) \hookrightarrow Iso(g_0).
\end{equation}
The image $\rho_\alpha(T_\alpha)$ of $T_\alpha$ under this injective homomorphism is a $l(\alpha)$-dimensional torus in $Iso(g_0)$.

Notice that $\forall \epsilon > 0$, there is a positive number $K(\epsilon)>0$ such that the set 
\begin{equation}
\{\exp(\sum_{i=1}^qt_iZ_i)|_N \quad |\quad t_1,\hdots,t_q \in [-K(\epsilon),K(\epsilon)]\}
\end{equation}
is $\epsilon$-dense in $T_0$, i.e. $\forall \psi \in T_0$ there is $\varphi = \exp(\sum_{i=1}^qt_iZ_i)|_N$ for some $t_1,\hdots,t_q \in [-K(\epsilon),K(\epsilon)]$ such that $d(\varphi,\psi) \leq \epsilon$ with respect to the distance
\begin{equation}
d(\varphi,\psi) := \max_{x\in N}d_{g_0}(\varphi(x),\psi(x)),
\end{equation}
with $d_{g_0}$ being the distance on $N$ generated by the Riemannian metric $g_0$.

If $B^r$ is small enough then $\forall \alpha \in B^r$ we have that $\exp(\sum_{i=1}^q t_iZ_i)|_N$ and $\exp(\sum_{i=1}^q t_iZ_i)|_{N\times\{\alpha\}}$ are also $\epsilon$-close for any $t_1,\hdots,t_q \in [-K(\epsilon),K(\epsilon)]$ after projecting $N\times\{\alpha\}$ to $N$ by the natural projection. Hence $\exp(\sum t_iZ_i)|_N$ and $\rho_\alpha(\exp(\sum_{i=1}^qt_iZ_i)|_{N\times\{\alpha\}})$ are also $\epsilon'$-close. Via these close elements, we can construct a map from $T_0$ to $\rho_\alpha(T_\alpha)$ which is a near-homomorphism (in the sense of Grove-Karcher-Ruh \cite{Gro1974}), and which, by Grove-Karcher-Ruh theorem \cite{Gro1974}, can be approximated by a true homomorphism
\begin{equation}
\chi_\alpha: T_0 \to \rho_\alpha(T_\alpha).
\end{equation}
This homomorphism is injective. (The kernel is trivial, because it is a subgroup of the compact group $Iso(g_0)$ and contains only elements which are close to identity. No non-trivial subgroup of a compact Lie group is like that). The injectivity of $\chi_\alpha: T_0 \to \rho(T_\alpha)$ implies in particular that 
\begin{equation}
\rho_{\alpha}^{-1}\circ\chi_\alpha: T_0 \to T_\alpha
\end{equation}
is injective, and $l(\alpha) \geq l = l(0)$ $\forall \alpha \in B^r$ (provided that $B^r$ is small enough). So we get a family of $n$-dimensional tori
\begin{equation}
\hat{T}_\alpha = \rho_{\alpha}^{-1}\circ \chi_\alpha(T_0) \subset T_\alpha \subset Iso(g_\alpha).
\end{equation}
Notice the uniqueness of the construction of $\hat{T}_\alpha$ due to the commutativity of $T_\alpha$ (which leads to the rigidity of homomorphisms from $T_0$ to $T_\alpha$).

It is easy to see that the family $\hat{T_\alpha}$ is continuous with respect to $\alpha$. Indeed, by construction this family is continuous at $\alpha = 0$. Starting at another $N \times \{\beta\}$ instead of $N\times \{0\}$ where $\beta$ is sufficiently close to $0$, we get another family of tori which is continuous at $\beta$. But the tori in this latter family contain the tori of the family $\hat{T}_\alpha$, which implies that the family $\hat{T}_\alpha$ is also continuous at $\beta$.

The continuity of the family $\hat{T}_\alpha$ means that there is a $l$-dimensional torus subgroup
\begin{equation}
\mathcal{T} \subseteq \textnormal{Homeo}(\mathcal{U}(N))
\end{equation}
such that each element $\varphi \in \mathcal{T}$ preserves every level set $N_\alpha = N \times \{\alpha\} \subseteq \mathcal{U}(N)$ and 
\begin{equation}
\varphi|_{N_\alpha} \in \hat{T}_\alpha \quad \forall \alpha \in B^r.
\end{equation}
This torus $\mathcal{T}$ is the torus action that we are looking for. The fact that $\mathcal{T}$ preserves the system is an immediate consequence of Theorem \ref{thm:action} (it is enough to look at the torus action on each level set $N_\alpha = N \times \{\alpha\}$).
\end{proof}
\begin{remark} We could prove that $\mathcal{T}$ is smooth on each level set $N_\alpha$ and is continuous in $\mathcal{U}(N)$, but we could not yet prove that $\mathcal{T}$ is smooth in $\mathcal{U}(N)$, though we suspect that this is true as well. (The proof of the smoothess of the torus action $\mathcal{T}$ in $\mathcal{U}(N)$ is a tricky problem which probably requires some very  subtle topological arguments). Nevertheless, the smoothess of $\mathcal{T}$ on every invariant level set $N_\alpha$ is good enough for doing reduction. 
If we start at two different regular level sets $N_1$ and $N_2$, then we get two Liouville torus actions which may act in a same domain but may have different dimensions (both greater of equal to $q$).
\end{remark}

\begin{thm}\label{thm:isometry}
Let $g_\alpha (\alpha \in \mathbb{R}^k)$ be a smooth $k$-dimensional family of Riemannian metrics on a smooth compact manifold $N$. Denote by 
\begin{equation} 
\mbox{Iso}(\alpha) = \{\varphi \in \mbox{Diffeo}(N) | \varphi_*g_\alpha = g_\alpha\}
\end{equation}
the isometry group of $g_\alpha$. Then there is a small neighborhood $B$ of $0$ in $\mathbb{R}^k$ such that for each $\alpha \in B$ there exists an injective group homomorphism 
\begin{equation}
\rho_\alpha: \mbox{Iso}(\alpha) \hookrightarrow \mbox{Iso}(0)
\end{equation}
such that $$\max_{\varphi \in Iso(\alpha)}\mbox{d}(\varphi,\rho_\alpha(\varphi)) \xrightarrow[\alpha \to 0]{} 0$$ 
where $\mbox{d}(\varphi,\rho_\alpha(\varphi)) = \max_{x\in N} \mbox{d}_0(\varphi(x), \rho_\alpha(\varphi)(x)),$ with $d_0$ being the distance on $N$ generated by $g_0$.
\end{thm}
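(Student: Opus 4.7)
The plan is to exploit the Myers--Steenrod theorem, which realizes each $\mbox{Iso}(\alpha)$ as a compact Lie subgroup of $\mbox{Diff}(N)$, combined with a stability statement for isometry groups under smooth variation of the metric, and then to apply the Grove--Karcher--Ruh near-homomorphism theorem (already invoked in the proof of Theorem~\ref{thm:316}) to convert a nearest-point projection $\rho_\alpha:\mbox{Iso}(\alpha)\to\mbox{Iso}(0)$ into a true injective group homomorphism satisfying the required $C^0$-closeness bound.

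The first key step is a stability lemma: for every $\epsilon>0$ there exists $\delta>0$ such that for $\|\alpha\|<\delta$, each $\varphi\in\mbox{Iso}(\alpha)$ is $C^k$-within $\epsilon$ of some element of $\mbox{Iso}(0)$. I would prove this by contradiction. If $\alpha_n\to 0$ and $\varphi_n\in\mbox{Iso}(\alpha_n)$ stayed at uniformly positive $C^k$-distance from $\mbox{Iso}(0)$, then since $g_{\alpha_n}\to g_0$ smoothly, the maps $\varphi_n$ are uniformly bounded in every $C^k$ norm (by elliptic regularity applied to the isometry equation $\varphi_n^{*}g_{\alpha_n}=g_{\alpha_n}$, or equivalently from the fact that an isometry is determined by its $1$-jet at a single point, translating uniform bounds on reference frames into $C^k$ bounds on the map). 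An Arzel\`a--Ascoli subsequence then converges in $C^k$ to some $\varphi_\infty$, and passing to the limit in $d_{g_{\alpha_n}}(\varphi_n(x),\varphi_n(y))=d_{g_{\alpha_n}}(x,y)$ gives $\varphi_\infty\in\mbox{Iso}(0)$, the desired contradiction.

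Next, I fix a small $C^k$-tubular neighborhood of $\mbox{Iso}(0)$ inside $\mbox{Diff}(N)$ together with a smooth nearest-point projection $\pi$ onto $\mbox{Iso}(0)$ (viewing $\mbox{Iso}(0)$ as a compact embedded Lie submanifold of the Fr\'echet Lie group $\mbox{Diff}(N)$, or working locally in a finite-dimensional jet model). By the stability lemma, for $\alpha$ small the whole subgroup $\mbox{Iso}(\alpha)$ lies in this tubular neighborhood, so one may set $\rho_\alpha:=\pi|_{\mbox{Iso}(\alpha)}$. A standard estimate shows that the composition defect $\rho_\alpha(\varphi\psi)\,\rho_\alpha(\psi)^{-1}\rho_\alpha(\varphi)^{-1}$ is $O(\epsilon(\alpha))$-close to $\mathrm{id}_N$ uniformly in $\varphi,\psi\in\mbox{Iso}(\alpha)$, with $\epsilon(\alpha)\to 0$, so $\rho_\alpha$ is a near-homomorphism in the sense of Grove--Karcher--Ruh.

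The Grove--Karcher--Ruh theorem \cite{Gro1974} then produces a true homomorphism $\widetilde\rho_\alpha:\mbox{Iso}(\alpha)\to\mbox{Iso}(0)$ with $d(\rho_\alpha(\varphi),\widetilde\rho_\alpha(\varphi))=O(\epsilon(\alpha))$ uniformly in $\varphi$; combined with $d(\varphi,\rho_\alpha(\varphi))=O(\epsilon(\alpha))$ this yields the required $\max_{\varphi}d(\varphi,\widetilde\rho_\alpha(\varphi))\to 0$. Injectivity of $\widetilde\rho_\alpha$ follows from the no-small-subgroups property of Lie groups: if $\widetilde\rho_\alpha(\varphi)=\mathrm{id}_N$ then $\varphi$ is close to $\mathrm{id}_N$ in $\mbox{Diff}(N)$, so $\ker\widetilde\rho_\alpha$ is a closed subgroup of the compact Lie group $\mbox{Iso}(\alpha)$ contained in an arbitrarily small neighborhood of the identity, hence trivial once $\alpha$ is sufficiently small. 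The step I expect to be most delicate is the stability lemma at the level of $C^k$ convergence rather than merely $C^0$: one has to control all derivatives of the $\varphi_n$ uniformly in $n$, which is precisely where the smoothness of the family $g_\alpha$ enters essentially and without which the subsequent tubular-neighborhood argument cannot even be set up.
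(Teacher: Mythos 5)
Your proposal follows essentially the same route as the paper: a compactness/contradiction lemma showing every $\varphi\in \mbox{Iso}(\alpha)$ is close to some element of $\mbox{Iso}(0)$ for $\alpha$ small, a choice map (your tubular-neighborhood projection, the paper's piecewise-continuous selection $\mu_\alpha$) which is a near-homomorphism, rectification into a true homomorphism via the Grove--Karcher--Ruh theorem, and injectivity from the no-small-subgroups property of compact Lie groups. The extra $C^k$-control you insist on is not needed in the paper, which works only with the $C^0$ distance $d_0$, but this does not change the argument in any essential way.
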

\begin{proof}
First let us remark that for any $\epsilon > 0$ there exists a small neighborhood $B_\epsilon$ of $0$ in $\mathbb{R}^k$ such that $\forall \alpha \in B_\epsilon$ and $\forall \varphi \in Iso(\alpha)$ there exists $\varphi' \in Iso(0)$ such that $\varphi$ is $\epsilon$-close to $\varphi'$ with respect to the above distance, i.e.$\max_{x\in N} d_0(\varphi(x),\varphi'(x)) \leq \epsilon.$
Indeed, if it is not the case, then there exists a number $\epsilon > 0$, a family $\alpha_n \in \mathbb{R}^k$ $ (n \in \mathbb{N})$ such that $\alpha_n \xrightarrow{n\to\infty} 0$, an element $\varphi_n \in Iso(\alpha_n)$ for each $n \in \mathbb{N}$ such that $\varphi_n$ is not $\epsilon$-close to any element of $Iso(0)$. Due to the compactness of $N$, there is an infinite subsequence $(i_n) \subseteq \mathbb{N}$ and a point $x_0 \in N$ such that$\varphi_{i_n}(x_0) \xrightarrow[n\to \infty]{} y  \in N$ for some $y\in N$, and moreover, the differential $D \varphi_{i_n}(x)$ also converges when $n\to \infty$. One then deduces easily from the fact that $g_{\alpha_n} \xrightarrow{n\to\infty}g_0$ and the rigidity of isometries that $\varphi_{i_n}$ converges to some element $\varphi \in Iso(\alpha)$ when $n \to \infty$. But it means that $\varphi_{i_n}$ is $\epsilon$-close to $\varphi \in Iso(\alpha)$ when $n$ is big enough, which is a contradiction.

Thus, for every small number $\epsilon > 0$, if $\alpha \in \mathbb{R}^k$ is close enough to $0$ in $\mathbb{R}^k$ then we can construct a map
\begin{equation}
\mu_\alpha: Iso(\alpha) \to Iso(0)
\end{equation}
such that $\mu_\alpha(\varphi)$ is $\epsilon$-close to $\varphi$ for all $\varphi \in Iso(\alpha)$. We can arrange so that this map $\mu_\alpha$ is at least piecewise-continuous (for a partition of $Iso(\alpha)$ into a finite number of closed domains). It is then clear that $\mu_\alpha$ is a near-homomorphism, i.e. $\mu_\alpha(\varphi\circ\psi)$ is $\epsilon$-close to $\mu_\alpha(\varphi).\mu_\alpha(\psi)$ for any $\varphi, \psi \in Iso(\alpha)$, where $\epsilon'$ is a small positive number depending on $\epsilon$ but does not depend on $\varphi$ and $\psi$ ($\epsilon' \to 0$ when $\epsilon \to 0$). According to a classical result of Grove, Karcher and Ruh \cite{Gro1974}, any such near-homomorphism between two compact Lie groups and be approximated by a true homomorphism. Thus we get a homomorphism
\begin{equation}
\rho_\alpha: Iso(\alpha) \to Iso(\alpha)
\end{equation}
which is close to $\mu_\alpha$, i.e. $d(\rho_\alpha(\phi),\mu_\alpha(\varphi)) \leq \epsilon'' \quad \forall \varphi \in Iso(\alpha)$ (for some $\epsilon''$ such that $\epsilon'' \to 0$ when $\epsilon \to 0$).

But it also means that $d(\rho_\alpha(\varphi),\varphi) \leq \epsilon'''$
where $\epsilon''' \to 0$ when $\epsilon \to 0$. The last inequality also implies that $\rho_\alpha$ must be injective when $\alpha$ is close enough to $0$.
\end{proof}
\subsection{Normal form for integrable SDS of types $(0,q,r)$ and $(1,q,r)$}

According to Definition \ref{defn:pqr}, an SDS $(M,X)$ is called integrable of type $(0,q,r)$ if there exists an deterministic integrable system $(Z_1,\hdots, Z_q, F_1,\hdots,F_r)$ on $M$ (where $q+r = \dim M$)  such that each $Z_i$ is an infinitesimal diffusion symmetry of $X$ and each $F_i$ is a strong first integral of $X.$ We will assume that the map $(F_1,\hdots,F_r): M \to \mathbb{R}^r$ is proper. According to Theorem \ref{thm:Liou} and Theorem \ref{thm:316}, each connected regular level set of the system is a $q$-dimensional torus in a neighborhood of which we have a free $\mathbb{T}^q$-action which preserves $X$ and $Z_1,\hdots,Z_q,F_1,\hdots,F_r.$ As a consequence, we have the following normal form theorem:

\begin{thm}[Normal form for SDS's of type $(0,q,r)$]\label{thm:0qr}
Let $X = X_0 + \sum_{i=1}^m X_i \circ \dfrac{dB_{t}^i}{dt}$ be an integrable SDS of type (0,q,r), with the aid of a deterministic integrable system $(Z_1,\hdots,Z_q,F_1,\hdots,F_r)$ on a manifold $M$. Assume that the map $(F_1,\hdots,F_r): M \to \mathbb{R}^r$ is proper and that  $N= \{F_1=c_1,\hdots,F_r=c_r\}$ is a connected regular level set of the system, i.e. $dF_1(x)\wedge\hdots\wedge dF_r(x) \ne 0$ and $Z_1(x)\wedge\hdots\wedge Z_q(x) \ne 0$ for every point $x\in N$. Then $N$ is diffeomorphic to a torus $\mathbb{T}^q$, and in a tubular neighborhood $\mathcal{U} (N) \cong \mathbb{T}^q \times B^r$ of $N$ there is a coordinate system $(\theta_1(mod 1),\hdots,\theta_q(mod 1), \gamma_1,\hdots\gamma_r)$ such that $X$ is diffusion equivalent to a system $Y$ of the type
\begin{equation}
Y = Y_0 + \sum_{i=1}^m Y_i \circ \dfrac{dB_{t}^i}{dt},
\end{equation} where 
\begin{equation}Y_i  = \sum_{k=1}^q a_{ik}(\gamma_1,\hdots,\gamma_r)\partial_{\theta_k}\quad (i=0,1,\hdots,m)\end{equation} are vector fields which are tangent to the tori $\mathbb{T}^q \times \{pt\}$ and whose coefficients are constant on the tori.
\end{thm}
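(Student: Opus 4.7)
The plan is to put the underlying deterministic integrable data into Liouville normal form, use the strong-first-integral property to confine all of the $X_i$'s to the level tori, invoke Theorem \ref{thm:actiondiff} to force $A_X$ to be torus-invariant, and then read off a diffusion-equivalent SDS of the advertised shape by trivializing the ``square-root'' step along the torus direction.

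First I would apply Liouville's theorem (Theorem \ref{thm:Liou}) to the deterministic integrable system $(Z_1,\hdots,Z_q,F_1,\hdots,F_r)$. Properness of $(F_1,\hdots,F_r)$ together with regularity of $N$ gives $N \cong \mathbb{T}^q$ and a tubular neighborhood $\mathcal{U}(N) \cong \mathbb{T}^q \times B^r$ with coordinates $(\theta_1,\hdots,\theta_q,\gamma_1,\hdots,\gamma_r)$ in which $Z_j = \sum_k b_{jk}(\gamma)\,\partial_{\theta_k}$, each $F_j$ depends only on $\gamma$, and the Liouville $\mathbb{T}^q$-action is translation in the angles. Since each $F_j$ is a strong first integral of $X$, one has $X_i(F_j)=0$ for all $i,j$, so every vector field in the Stratonovich expression of $X$ is tangent to the tori $\mathbb{T}^q\times\{\gamma\}$ and takes the form $X_i = \sum_{k=1}^q c_{ik}(\theta,\gamma)\,\partial_{\theta_k}$. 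Consequently $A_X = X_0 + \tfrac12\sum_i X_i^2$ produces only $\theta$-derivatives and may be written as
\begin{equation}
A_X = \sum_k \beta_k(\theta,\gamma)\,\partial_{\theta_k} + \tfrac12\sum_{k,l} A_{kl}(\theta,\gamma)\,\partial_{\theta_k}\partial_{\theta_l},
\end{equation}
with $A_{kl}(\theta,\gamma) = \sum_i c_{ik}(\theta,\gamma)\,c_{il}(\theta,\gamma)$ symmetric and positive semi-definite.

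Next I would invoke Theorem \ref{thm:actiondiff}(i): by hypothesis $A_X$ commutes with every $Z_j$, so $A_X$ is invariant under the Liouville $\mathbb{T}^q$-action. In our coordinates this forces $\beta_k$ and $A_{kl}$ to be independent of $\theta$, and the key observation is that the identity $A_{kl}(\gamma) = \sum_i c_{ik}(\theta,\gamma)\,c_{il}(\theta,\gamma)$ now holds for \emph{every} $\theta$. Freezing $\theta=0$ and setting $a_{ik}(\gamma) := c_{ik}(0,\gamma)$ for $i=1,\hdots,m$ therefore supplies, free of charge, a pointwise decomposition $A_{kl}(\gamma) = \sum_i a_{ik}(\gamma)\,a_{il}(\gamma)$, sidestepping any need for a smooth matrix square root of the possibly degenerate positive semi-definite symbol $A_{kl}(\gamma)$.

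To conclude, put $a_{0k}(\gamma) := \beta_k(\gamma)$ and define $Y_i := \sum_k a_{ik}(\gamma)\,\partial_{\theta_k}$ for $i=0,1,\hdots,m$. Because each $Y_i$ has $\theta$-independent coefficients, $Y_i^2 = \sum_{k,l} a_{ik}(\gamma)\,a_{il}(\gamma)\,\partial_{\theta_k}\partial_{\theta_l}$ with no first-order piece, and hence
\begin{equation}
A_Y \;=\; Y_0 + \tfrac12\sum_{i=1}^m Y_i^2 \;=\; \sum_k \beta_k(\gamma)\,\partial_{\theta_k} + \tfrac12 \sum_{k,l} A_{kl}(\gamma)\,\partial_{\theta_k}\partial_{\theta_l} \;=\; A_X,
\end{equation}
so $Y$ is diffusion-equivalent to $X$ and already of the required normal form. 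The only substantive input is the torus-invariance of $A_X$ supplied by Theorem \ref{thm:actiondiff}; once that is in hand the construction is a direct coordinate computation, and the usual obstruction of manufacturing a smooth square root of a degenerate diffusion symbol is bypassed by exploiting the fact that the original $X_i$'s already supply such a square root at every $\theta$, one of which we simply pick.
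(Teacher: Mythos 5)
Your proposal is correct and follows essentially the same route as the paper's proof: use Theorem \ref{thm:actiondiff} to make $A_X$ invariant under the Liouville $\mathbb{T}^q$-action, note that the strong first integrals force all the $X_i$ to be tangent to the level tori, and then define the $Y_i$ by freezing the $X_i$ at the section $\{\theta=0\}$ and extending them $\mathbb{T}^q$-invariantly, with $Y_0 := A_X - \tfrac12\sum Y_i^2$ the leftover invariant vector field. Your coordinate computation of $\beta_k$ and $A_{kl}$ is just a more explicit rendering of the paper's principal-symbol argument, so no further comment is needed.
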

\begin{proof}
It follows directly from Definition \ref{defn:pqr} and Theorem \ref{thm:actiondiff} that $N \cong \mathbb{T}^q$ and $A_X$ is invariant with respect to the Liouville $\mathbb{T}^q$-action . Let $(\theta_1(mod 1),\hdots,\theta_q(mod 1),\gamma_1,\hdots,\gamma_r)$ be a coordinate system in $\mathcal{U} \cong \mathbb{T}^q \times B^r$ compactible with the Liouville $\mathbb{T}^q$-action, i.e. the action is by translations in the periodic  coordinates $\gamma_1,\hdots,\gamma_r$.

For $i=1,\hdots,m$ denote by $Y_i$ the vector field
\begin{equation}
Y_i(\theta_1,\hdots,\theta_q,\gamma_1,\hdots,\gamma_r) = X_i(0,\hdots.0,\gamma_1,\hdots,\gamma_r),
\end{equation}
i.e. $Y_i$ is $\mathbb{T}^q-$invariant and coincides with $X_i$ at the section $\{\theta_1=0,\hdots,\theta_q=0\}$. Then $\dfrac{1}{2}\sum Y_{i}^2$ and $A_X$ have the same principal symbol at  $\{\theta_1=0,\hdots,\theta_q=0\}.$ But since both $\dfrac{1}{2}\sum Y_{i}^2$ and $A_X$ are $\mathbb{T}^q-$invariant, they have the same principal symbol everywhere. Thus $A_X - \dfrac{1}{2}\sum Y_{i}^2$ is a first order operator which is $\mathbb{T}^q-$invariant, i.e. a $\mathbb{T}^q$-invariant vector field. Put $Y_0 = A_X - \dfrac{1}{2}\sum Y_{i}^2,$ we have that $Y = Y_0 + \sum_{i=1}^m Y_i \circ \dfrac{dB_{t}^i}{dt}$ is $\mathbb{T}^q$-invariant and is diffusion equivalent to $X$. The fact that $X$ is tangent to the level sets implies that the vector fields $Y_0,\hdots,Y_m$ are of the form $Y_i = \sum_{k=1}^q a_{ik}(\gamma_1,\hdots,\gamma_r)\partial_{\theta_k}.$
\end{proof}

Consider now an integrable SDS $(A_X,Z_1,\hdots,Z_q,F_1,\hdots,F_r)$ of type $(1,q,r),$ where $A_X = X_0 + \dfrac{1}{2}\sum_{i=1}^m X_i^2$ is the diffusion generator of $X=X_0 + \sum X_i \circ \dfrac{dB_{t}^i}{dt}$. As before, we will assume that $N$ is a connected compact regular level set of the system. According to Theorem \ref{thm:316}, there is either an effective $\mathbb{T}^{q+1}$-action or an effective $\mathbb{T}^q$-action which preserves the system in a neighborhood of $N$. In the case of a $\mathbb{T}^{q+1}$-action, $X$ is in fact integrable with the aid of a system $(\Theta_1,\hdots,\Theta_{q+1},F_1,\hdots,F_r)$ of type $(0,q+1,r)$, where $\Theta_1,\hdots,\Theta_{q+1}$ are generators of the $\mathbb{T}^{q+1}$-action. Consider now the case when the torus action is really of dimension $q$ and not $q+1$. In this case, semi-locally in the neighborhood of a regular level set $N$, we can replace $Z_1,\hdots,Z_q$ by $q$ generators $\Theta_1,\hdots,\Theta_{q}$ of the $\mathbb{T}^q$-action. Making the reduction of the system with respect to this $\mathbb{T}^q$-action, we get a system of type $(1,0,r)$, i.e. a $r$-dimensional family of $1$-dimensional SDS.

Remark also that, due to the fact that $\dim N = q+1$ and there is an effective $\mathbb{T}^q$-action on $N$ (which is free almost everywhere), it is easy to classify $N$ topologically: either $N$ is a $(q+1)$-dimensional torus, or $N$ is a ``lense space'', which is obtained by gluing together $2$ copies of $\mathbb{T}^{q-1} \times D^2$ (where $D^2$ is a $2$-dimensional disk) along the boundary $\mathbb{T}^{q-1} \times \partial D^2 \cong \mathbb{T}^q$ via some automorphism of $\mathbb{T}^q$.

Similarly to the $(0,q,r)$-type case, in the $(1,q,r)$-type case we can also have a normal form for the system near a regular orbit of the Liouville torus action. More generally, we have the following simple result, whose proof is absolutely similar to the proof of Theorem \ref{thm:0qr}:
\begin{thm}\label{thm:1qr}
Assume that a SDS $X = X_0 + \sum_{i=1}^m X_i \circ \dfrac{dB_{t}^i}{dt}$ is diffusion invariant with respect to an effective action of a torus $\mathbb{T}^l$ on a manifold $M$. Let $K$ be a regular orbit of this torus action in $M$. Then $K \cong \mathbb{T}^l$, and in a neighborhood of $K$ the system $X$ is diffusion equivalent to a system $Y=Y_0 + \sum_{i=1}^m Y_i \circ \dfrac{dB_{t}^i}{dt}$ such that $Y_0,Y_1,\hdots,Y_m$ are invariant with respect to the $\mathbb{T}^l$-action.
\end{thm}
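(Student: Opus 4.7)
The plan is to imitate verbatim the proof of Theorem \ref{thm:0qr}, the difference being that here the $\mathbb{T}^l$-action is supplied directly by the hypothesis rather than produced as a Liouville torus action. First I would observe that, since the action of $\mathbb{T}^l$ on $M$ is effective and $K$ is a regular (hence principal) orbit, the isotropy subgroup at any point of $K$ is trivial, so $K$ is diffeomorphic to $\mathbb{T}^l$. By the equivariant tubular neighborhood theorem I can then choose coordinates $(\theta_1 \bmod 1,\dots,\theta_l \bmod 1, s_1,\dots,s_{\dim M - l})$ on a neighborhood $\mathcal{U}(K) \cong \mathbb{T}^l \times B$ in which the $\mathbb{T}^l$-action is by translation in the $\theta$-variables and $K = \{s=0\}$.

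Next, for $i=1,\dots,m$ I would define
$$Y_i(\theta,s) := X_i(0,s),$$
that is, take the components of $X_i$ in the basis $(\partial_{\theta_j},\partial_{s_k})$ along the slice $\{\theta=0\}$ and transport them along each torus orbit. By construction each $Y_i$ is $\mathbb{T}^l$-invariant. The two second-order operators $\dfrac{1}{2}\sum_{i=1}^m Y_i^2$ and $A_X = X_0 + \dfrac{1}{2}\sum_{i=1}^m X_i^2$ then have the same principal symbol along $\{\theta=0\}$ (since $Y_i = X_i$ there), and both symbols are $\mathbb{T}^l$-invariant --- $A_X$ by the diffusion-invariance hypothesis on $X$, and $\dfrac{1}{2}\sum Y_i^2$ by construction of the $Y_i$. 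Hence the two principal symbols coincide throughout $\mathcal{U}(K)$.

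Consequently the operator
$$Y_0 := A_X - \dfrac{1}{2}\sum_{i=1}^m Y_i^2$$
has vanishing second-order symbol, so it is of order at most one; and since $A_X(1)=0$ and $Y_i^2(1) = Y_i(Y_i(1)) = 0$ for each $i$, it annihilates constants, hence is a vector field. As the difference of two $\mathbb{T}^l$-invariant operators, $Y_0$ is itself $\mathbb{T}^l$-invariant. Setting $Y = Y_0 + \sum_{i=1}^m Y_i \circ \dfrac{dB_t^i}{dt}$ we then have $A_Y = A_X$, so $Y$ is diffusion equivalent to $X$, and all of $Y_0,Y_1,\dots,Y_m$ are $\mathbb{T}^l$-invariant, as desired.

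There is no serious obstacle in this argument: the torus action is handed to us, and everything reduces to a principal-symbol comparison on the transverse slice followed by an invariance argument. The only point requiring mild care is the equivariant linearization of the action near $K$, which is standard for a compact Lie group action near a principal orbit.
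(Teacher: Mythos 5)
Your proposal is correct and follows essentially the same route as the paper, which proves this theorem by declaring its proof ``absolutely similar'' to that of Theorem \ref{thm:0qr}: transport the $X_i$ from a slice $\{\theta=0\}$ along the torus orbits to get invariant $Y_i$, compare principal symbols of $\frac{1}{2}\sum Y_i^2$ and $A_X$ using invariance, and take $Y_0 = A_X - \frac{1}{2}\sum Y_i^2$ as the remaining invariant vector field. The only difference is cosmetic: here the $\mathbb{T}^l$-invariance of $A_X$ is a hypothesis rather than a consequence of Theorem \ref{thm:actiondiff}, as you note.
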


\subsection{Integrable SDS's of type (p,0,0)} 
Given an arbitrary integrable SDS of type $(p,q,r)$, one can reduce it with respect to the Liouville torus $\mathbb{T}^l$-action provided by Theorem \ref{thm:316}, with $l\geq q$. After the reduction, the system becomes a system of type $(p',0,r)$, where $p' = p+q-l \leq p$. If we restrict it to the level sets, then after reduction and restriction it becomes an integrable system of type $(p',0,0)$.

Consider now an integrable SDS of type $(p,0,0)$. It means a $p$-tuple of diffusion generators $A_1,\hdots, A_p$ on a $p$-dimensional manifold $M^p$ which commute pairwise. We will assume that for almost every $x \in M$, the restriction of the symbols $\hat{A_i} : T^* M \to \mathbb{R}$ to $T_{x}^* M$ is a linear independent family of quadratic functions on $T_{x}^* M$. This additional assumption is put here in order to avoid the ``fake'' $(p,0,0)$ type, for example given by a family of the type $(A_1, A_2= F_1A_1,\hdots, A_p = F_{p-1}A_1)$ where $F_1,\hdots, F_{p-1}$ are functions which commute with $A_1$. (It would be more natural to consider this example as a system of type $(1,0,p-1)$ given by the family $(A_1, F_1,\hdots, F_{p-1})$ than a system $(A_1,F_1A_1,\hdots, F_{p-1}A_1)$ of type $(p,0,0)$).

\begin{prop}
With the above notations and assumptions, at any point $x \in M$ where the symbols $\hat{A}_1|_{T_{x}^* M}, \hat{A}_2|_{T_{x}^* M},\hdots, \hat{A}_p|_{T_{x}^* M}$ are functionally independent as a functions on $T^*M$, and for any constants $\alpha_1,\hdots, \alpha_p > 0$ the operator $\sum \alpha_i A_i$ is elliptic at $x$, i.e the sum $\sum \alpha_i \hat{A}_i|_{T_{x}^* M} : T_{x}^* M \to \mathbb{R}$ is a positive definite quadratic form.
 \end{prop}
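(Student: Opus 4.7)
The plan is to reduce positive-definiteness of the quadratic form $\sum_i\alpha_i\hat{A}_i|_{T_x^*M}$ to ruling out a common nonzero null vector of the restricted symbols $Q_i:=\hat{A}_i|_{T_x^*M}$, and then to extract such a contradiction directly from the functional-independence hypothesis.

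First I would unpack the symbols. Since each $A_i$ is a diffusion operator, $A_i=X_0^i+\frac{1}{2}\sum_k(X_k^i)^2$, and its principal symbol is $\hat{A}_i=\frac{1}{2}\sum_k(\hat{X}_k^i)^2$, a sum of squares of fiberwise-linear functions on $T^*M$. Restricting to a single fiber expresses $Q_i=\hat{A}_i|_{T_x^*M}$ as a sum of squares of linear forms on $T_x^*M$, so each $Q_i$ is positive semidefinite. Any linear combination $\sum\alpha_iQ_i$ with $\alpha_i>0$ is therefore positive semidefinite as well, and fails to be positive definite precisely when the $Q_i$ share a nonzero common null vector.

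The core of the argument is then to exclude such a common null vector. Suppose, for contradiction, that some $\xi\in T_x^*M\setminus\{0\}$ satisfies $Q_i(\xi)=0$ for every $i$. Since $Q_i\ge 0$, the vector $\xi$ minimizes $Q_i$ and so lies in its radical; writing $Q_i(y)=y^TM_iy$ in linear coordinates on $T_x^*M$, this is the statement $M_i\xi=0$. Consequently $Q_i$ is invariant under translation in the direction $\xi$, so the differential $dQ_i(y)=2M_iy$ satisfies $\langle dQ_i(y),\xi\rangle=0$ for every $y$. All $p$ differentials $dQ_1(y),\dots,dQ_p(y)$ therefore lie in the $(p-1)$-dimensional annihilator of $\xi$ inside $(T_x^*M)^*$, and are linearly dependent at every $y\in T_x^*M$. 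This contradicts the functional independence of $Q_1,\dots,Q_p$ on the fiber $T_x^*M$. Hence no such $\xi$ exists and $\sum\alpha_iA_i$ is elliptic at $x$.

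The main obstacle --- really the only subtle point --- is to verify that the hypothesis ``functionally independent as functions on $T^*M$'' translates into functional independence of the $Q_i$ as polynomials on the single fiber $T_x^*M$. The fiberwise version is what the argument requires, and it is strictly stronger than functional independence of the $\hat{A}_i$ on the total space; I would read the proposition as asserting precisely this fiberwise independence at $x$. Note that commutativity of the $A_i$ never enters the proof: positive semidefiniteness of each symbol, together with functional independence on the fiber, is all that is used.
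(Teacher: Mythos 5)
Your argument is correct, but it is genuinely different from the one in the paper. You stay entirely inside the single fiber $T_x^*M$: each restricted symbol $Q_i$ is a sum of squares, hence positive semidefinite, so failure of ellipticity forces a common null vector $\xi$, which lies in the radical of every $Q_i$; then all differentials $dQ_i(y)$ land in the $(p-1)$-dimensional annihilator of $\xi$, contradicting fiberwise functional independence. The paper instead works on the total space $T^*M$: it characterizes ellipticity via $\mathrm{span}_x\bigl(\sum\alpha_i\hat{A}_i\bigr)=T_xM$, picks a function $f$ with $df(x)\neq 0$ annihilating that span, observes that $\{\hat{A}_i,\pi^*f\}=0$ on the fiber while $\{\hat{A}_i,\hat{A}_j\}=0$ by the commutativity of the operators, and then uses the fact that an isotropic family in the $2p$-dimensional symplectic space can contain at most $p$ independent vectors to force $d\hat{A}_1\wedge\cdots\wedge d\hat{A}_p\wedge d(\pi^*f)=0$ along the fiber, again contradicting independence. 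So the paper's route genuinely uses the commutativity hypothesis (through Poisson commutation of symbols) and the symplectic structure of $T^*M$, in line with its Proposition relating integrable SDS's to integrable Hamiltonian systems, whereas your route is purely linear-algebraic and, as you correctly note, proves a slightly stronger statement: ellipticity of positive combinations holds for any family of diffusion operators whose restricted symbols are functionally independent on the fiber, commuting or not. Your reading of the hypothesis as fiberwise independence is the right one and is exactly what the paper's own final step invokes ("the restriction of $A_1,\dots,A_p$ to $T_x^*M$ are functionally dependent"), so there is no gap there.
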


\begin{proof}
For each point $x \in M$ and a diffusion operator $A$ on $M$, denote by $\mbox{span}_x \hat{A} \subset T_{x}^* M$ the tangent vector subspace of $T_{x} M$ spanned by $\hat{A}$ at $x$. In other words, $Z \in \mbox{span}_x \hat{A}$ if and only if $\langle z,\alpha\rangle = 0$ for any $\alpha \in T_{x}^* M$ such that $\hat{A}(\alpha) = 0$.

If $\hat{A} = \dfrac{1}{2} \sum_{i=1}^k \hat{X}_{i}^2$ where $X_i$ are vector fields then \begin{equation}\label{eq:span}\mbox{span}_x \hat{A} = \mbox{Vect}\{ X_1(x),\hdots, X_k(x)\}\end{equation} though of course the definition of $\mbox{span}_x \hat{A}$ is intrinsic and does not depend on the choice of $X_i$. It is also easy to see that if $\alpha_1,\hdots, \alpha_k > 0$ then $\mbox{span}_x(\sum \alpha_i \hat{A}_i)$ is the sum of the vector spaces $\mbox{span}_x (A_i)$

The ellipticity of $\sum \alpha_i A_i$ at $x$ means that $\sum \alpha_i \hat{A}_i |_{T^* M}$ is definite positive, i.e. \begin{equation}\mbox{span}_x(\sum \alpha_i \hat{A}_i) = T_x M.\end{equation}

If $x \in M$ is a point such that $\mbox{span}_x(\sum \alpha_j \hat{A}_j) \ne T_x M$, then there is a function $f : M \to \mathbb{R}$ such that $df(x) \ne 0$ but $df(x)$ vanishes on $\mbox{span}_x(\sum \alpha_i \hat{A}_i)$, i.e $\langle df(x), Y\rangle$ for any $j$ and any $Y \in \mbox{span}_x(\hat{A}_j)$. It implies that the Poisson bracket $\{\hat{A}_i ,F\} = 0$ on $T_{x}^* M$. But $\{\hat{A}_i, \hat{A}_j\} = 0$ due to our commutativity assumption. It follows that $d\hat{A}_1,\hdots, d\hat{A}_1, dF$ are linearly dependent at every point of $T_{x}^* M$, i.e. $d\hat{A}_1 \wedge \hdots \wedge d\hat{A}_p \wedge dF$ vanishes at every point of $T^* M$. The reason is that we cannot have more than $p$ linearly independent vector $Y_1, Y_2,\hdots, Y_p$ in a symplectic space of dimension $2p$ such that $\omega(Y_i,Y_j) = 0$ for all $i,j$. It implies that the restriction of $A_1,\hdots, A_p$ to $T_{x}^* M$ are functionally dependent. So $x$ is not a generic point of $M$, by our assumptions. At a generic point $x \in M$ we will have that $\mbox{span}_x (\sum \alpha_i \hat{A}_i) = T_xM$, i.e. $\sum \alpha_i A_i$ is elliptic at $x$.
\end{proof}

The family of principal symbols
\begin{equation}
H_{\alpha_1,\hdots,\alpha_p}:= \sum_{i=1}^p \alpha_i \hat{A}_i : T^*M \to \mathbb{R}
\end{equation}
$(\alpha_1,\hdots,\alpha_p > 0)$ is a commuting family of homogenous positive definite quadratic Hamiltonian functions on $T^*M$, whose corresponding Hamiltonian vector fields are the geodesic flows of a family of Riemannian metrics on $M$. Thus we get a $p$-dimensional family of Riemannian metrics on $M$, whose geodesic flows are integrable and commute with each other.

The problem of finding integrable geodesic flows and quantizing them into integrable diffusion operators is a big problem in geometry, which is out of the scope of this paper. Let us just mention that the so called \textbf{projectively equivalent metrics} (i.e. families of different metrics having the same un-parametrized geodesics) have been studied intensively by Matveev, Bolsinov, Topalov \cite{BoMa2003,Mat2000} and other authors. In particular, they showed that these metrics are integrable, can be quantized into integrable SDS of type $(p,0,0)$ in our sense, and are essentially the same as the so called (separable St$\ddot{a}$ckel-) Benenti systems studied by Benenti and many other authors in classical and quantum mechanics, see, e.g., \cite{Blasz2013,BoMa2003,DuVa2005,Mat2000}. The metrics (induced from $\mathbb{R}^n$) on multi-dimensional ellipsoids belong to this family of integrable metrics, and so the Brownian motion on a $p$-dimensional ellipsoid is an integrable SDS of type $(p,0,0)$.

\subsection{Reduced integrability of SDS's}
In Hamiltonian dynamics, when one talks about the integrability of a Hamiltonian system, one often actually means its \textbf{reduced integrability}, i.e. the integrability not of the original system, but of the reduced system with respect to some proper group action. For example, the famous Kovalevskaya top is originally a Hamiltonian system on $T^*SO(3)$ with 3 degrees of freedom, but people often consider it as a (reduced) integrable system with 2 degrees of freedom, see, e.g. the book by Bolsinov and Fomenko \cite{BoFo2004}. See \cite{Jova2002,Zung2006} for some theorems on the relation integrability and reduced integrability. Concerning SDS's, we can formulate the following conjecture:

\begin{conj}
Let $X = X_0 + \sum X_i \circ \dfrac{dB_t^i}{dt}$ be an SDS on a manifold $M$ which integrable and is diffusion invariant with respect to an action of a compact Lie group $G$ on $M$. Then the reduced system of $X$ on the (regular part of the) quotient space $M\slash G$ is also an integrable SDS.
\end{conj}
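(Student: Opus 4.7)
The strategy is to combine the reduction machinery of Section 2 with classical results on reduction of (possibly noncommutative) integrable Hamiltonian systems. First, by Theorem \ref{thm:proj}, the $G$-invariance of $A_X$ yields a reduced SDS $\bar X$ on the regular part of $M/G$ whose diffusion generator $A_{\bar X}$ satisfies $A_X\circ\pi^* = \pi^*\circ A_{\bar X}$, where $\pi:M\to M/G$ is the projection. To establish the conjecture, one must produce in a neighborhood of each regular point of $M/G$ a commuting family of differential operators realizing integrability of $\bar X$ in the sense of Definition \ref{defn:pqr2}, whose diffusion components commute with $A_{\bar X}$.

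The natural first step is to descend to the symbol level. By Proposition \ref{thm:(pqr)}, the principal symbols $(\hat\Lambda_1,\ldots,\hat\Lambda_p,\hat Z_1,\ldots,\hat Z_q,\hat F_1,\ldots,\hat F_r)$ form an integrable Hamiltonian system on $T^*M$, and the $G$-action lifts canonically to a Hamiltonian action on $T^*M$ with equivariant momentum map $\mu:T^*M\to\mathfrak{g}^*$. On the regular locus the symplectic reduction $\mu^{-1}(0)/G$ is canonically isomorphic to $T^*(M/G)$, and $\hat A_X$ descends to $\hat A_{\bar X}$. Although the integrable family itself need not be $G$-invariant, the enlarged family $(\hat\Lambda_i,\hat Z_j,\hat F_k,\mu^a)$ (with $a=1,\ldots,\dim G$) forms a noncommutative integrable system in the sense of Mishchenko-Fomenko, whose $G$-invariant, Poisson-commuting core descends, by a standard noncommutative-to-commutative reduction procedure (compare \cite{Jova2002,Zung2006}), to a commutative integrable Hamiltonian system on $T^*(M/G)$ of the correct transcendence degree $\dim(M/G)$, containing $\hat A_{\bar X}$.

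The main obstacle, and very likely the reason the conjecture is left open, is the \emph{requantization} step: one must lift this reduced Poisson-commutative family of functions on $T^*(M/G)$ to a genuinely commuting family of linear differential operators on $M/G$, with the correct number of second-order diffusion operators, commuting with $A_{\bar X}$. Quantization of integrable systems is not unique, and the ordering ambiguities can a priori destroy commutativity at subleading orders. A plausible explicit construction is to take $G$-averages of polynomial combinations of the original operators $\Lambda_i,Z_j,F_k$ chosen so that their symbols match the reduced Hamiltonians, and then to correct the lower-order commutator defects inductively by an order-filtration argument analogous to the one used in the proof of Theorem \ref{thm:dense}: commutator obstructions live in successively lower order pieces of the operator algebra, and one uses the $\mathbb{T}^p$-averaging (from the Liouville torus actions of Theorem \ref{thm:316}) to kill them at each stage. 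Showing that this inductive cohomological procedure terminates, or that the obstructions vanish, for every diffusion-invariant integrable SDS would prove the conjecture. A secondary technical point is the behavior at singular strata of $M/G$, which should be handled stratum-by-stratum as in Theorem \ref{thm:proj}.
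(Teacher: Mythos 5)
There is a genuine gap --- indeed, this statement is not proved in the paper at all: it is stated as an open conjecture (the authors explicitly say they do not know how to prove it), so there is no paper proof to match, and your text is a strategy outline rather than a proof. You acknowledge this yourself at the decisive point: the requantization step, where the reduced Poisson-commuting family on $T^*(M/G)$ must be lifted to a genuinely commuting family of differential operators containing $A_{\bar X}$, is exactly the hard part, and your proposal ends with ``showing that this inductive procedure terminates \ldots would prove the conjecture.'' The order-filtration argument of Theorem \ref{thm:dense} does not transfer: there one has a \emph{given} invariant operator and a torus action and merely checks invariance of its coefficients; here one must \emph{construct} operators whose commutators vanish to all orders, and there is no cohomological vanishing result in the paper (or cited) that kills the lower-order obstructions. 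So the conclusion is not established.

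There is also an earlier unjustified step. The hypothesis is only that $A_X$ is diffusion invariant under $G$; the auxiliary integrable family $(\Lambda_1,\ldots,\Lambda_p,Z_1,\ldots,Z_q,F_1,\ldots,F_r)$ is \emph{not} assumed $G$-invariant (the paper's symmetry-breaking examples show one cannot expect invariant realizations in general). Consequently the enlarged family $(\hat\Lambda_i,\hat Z_j,\hat F_k,\mu^a)$ need not be closed under Poisson bracket, since $\{\mu^a,\hat\Lambda_i\}$, $\{\mu^a,\hat Z_j\}$, $\{\mu^a,\hat F_k\}$ have no reason to lie in (or be functionally dependent on) the family; so it is not a Mishchenko--Fomenko noncommutative integrable system as claimed, and the ``$G$-invariant Poisson-commuting core'' may be too small --- you never verify that after averaging/reduction one still has $\dim(M/G)$ independent commuting symbols. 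Both the symbol-level counting and the operator-level quantization would have to be supplied before this outline becomes a proof; as it stands, the conjecture remains open.
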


\end{document}